\documentclass[11pt,reqno,twoside]{amsart}

\usepackage{graphicx}
\usepackage{lmodern} 
\usepackage[T1]{fontenc}
\usepackage{float}
\usepackage{subfigure}
\usepackage{amsmath,mathdots,amssymb,latexsym,amsthm,mathrsfs,epsfig}
\usepackage{multirow}
\usepackage{breqn}
\usepackage{float}
\usepackage{cite}
\usepackage{enumitem}
\usepackage{environ}
\usepackage[colorlinks=true,linkcolor=blue,citecolor=red,urlcolor=black]{hyperref}
\usepackage{enumitem}
\usepackage{multicol}
\usepackage{mathtools}
\usepackage{dsfont}

\NewEnviron{myequation}{%
	\begin{equation}
		\scalebox{1.1}{$\BODY$}
	\end{equation}
}

\usepackage{pifont}
\usepackage{color}
\usepackage{soul}
\renewcommand{\baselinestretch}{1.2}
\makeatletter
\newcommand{\single}{\let\CS=\@currsize\renewcommand{\baselinestretch}{1.1}\tiny\CS}
\newcommand{\singb}{\let\CS=\@currsize\renewcommand{\baselinestretch}{1}\tiny\CS}
\newcommand{\singa}{\let\CS=\@currsize\renewcommand{\baselinestretch}{1.2}\tiny\CS}
\newcommand{\oneandahalfspacing}{\let\CS=\@currsize\renewcommand{\baselinestretch}{1.5}\tiny\CS}
\newcommand{\singlespacing}{\let\CS=\@currsize\renewcommand{\baselinestretch}{1.6}\large\CS}
\newcommand{\bc}{\begin{center}}
	\newcommand{\ec}{\end{center}}
\newcommand{\be}{\begin{eqnarray}}
	\newcommand{\ee}{\end{eqnarray}}

\newcommand{\Hom}{\operatorname{Hom}}

\newcommand{\diag}{\operatorname{diag}}
\newcommand{\Irr}{\operatorname{Irr}}
\newcommand{\Ind}{\operatorname{Ind}}

\newcommand{\Rep}{\operatorname{Rep}}

\newcommand{\Span}{\operatorname{Span}}

\newcommand{\beano}{\begin{eqnarray*}}
	\newcommand{\eeano}{\end{eqnarray*}}

\newcommand{\ba}{\begin{array}}
	\newcommand{\ea}{\end{array}}

\makeatletter

\newtheorem{theorem}{Theorem}[section]
\newtheorem{corollary}[theorem]{Corollary}
\newtheorem{lemma}[theorem]{Lemma}
\newtheorem{prop}[theorem]{Proposition}
\newtheorem{conj}[theorem]{Conjecture}

\theoremstyle{definition}
\newtheorem{definition}[theorem]{Definition}
\newtheorem{rem}[theorem]{Remark}

\numberwithin{equation}{section}
\DeclareMathOperator{\D}{D}

\DeclareMathOperator{\GL}{GL}
\DeclareMathOperator{\Sp}{Sp}
\DeclareMathOperator{\St}{St}
\DeclareMathOperator{\Nrd}{Nrd}

\DeclareMathOperator{\Par}{P}

\DeclareMathOperator{\G}{G}
\DeclareMathOperator{\Ha}{H}

\DeclareMathOperator{\F}{F}
\DeclareMathOperator{\N}{N}

\DeclareMathOperator{\M}{M}
\DeclareMathOperator{\V}{V}

\DeclareMathOperator{\Ad}{Ad}

\allowdisplaybreaks

\begin{document}
\title[Distinguished representations of $\GL_n(\mathcal D)$]
{On representations of $\GL_n$ distinguished by\\ $\GL_1 \times \GL_{n-1}$ over a quaternion division algebra}


\author[Prem Dagar]{Prem Dagar}
\address{Department of Mathematics, Indian Institute of Science Education and Research Tirupati, India}
\email{dagarprem5@gmail.com, prem@labs.iisertirupati.ac.in}

\author[Hariom Sharma]{Hariom Sharma}
\address{Department of Mathematics, Indian Institute of Technology Bombay, Mumbai 400076, India}
\email{hariomshrma97@gmail.com, hariom@math.iitb.ac.in}

\author[Mahendra Kumar Verma]{Mahendra Kumar Verma}
\address{Department of Mathematics, Indian Institute of Technology Roorkee, Uttarakhand 247667, India}
\email{mahendraverma@ma.iitr.ac.in}


\subjclass[2020]{primary 22E35, 22E50; secondary 11F70}
\keywords{Distinguished representations, general linear group, linear periods, quaternion division algebra}
\date{}


\begin{abstract}

Let $\D$ be a quaternion division algebra over a non-Archimedean local field $\F$ of characteristic zero, and let $\G_n=\GL_n(\D)$. Let $\Ha_{1,n-1}$ denote the subgroup of $\G_n$ consisting of block-diagonal matrices of the form $\diag(g_1,g_2)$, where $g_1\in \G_1$ and $g_2\in \G_{n-1}$. In this article, we formulate a conjectural classification of irreducible smooth $\Ha_{1,n-1}$-distinguished representations of $\G_n$ for $n>2$. We prove this conjecture in the cases $n=3$ and $n=4$. When $n=2$, the results are well known due to the contributions by various authors.
\end{abstract}
\makeatletter
\def\@setcopyright{}
\def\@serieslogo{}
\def\serieslogo{}
\makeatother
\maketitle

\section{Introduction}\label{intro}

        \noindent Let $G$ be an $l$-group and $H$ be a closed subgroup of $G$. Let  $(\pi,\V)$ be a representation of $G$ on a complex vector space $\V$ and $\chi$ be a character of $H$. Denote by $\Hom_H(\pi,\chi)$, the space of linear forms $\xi$ on $\V$ such that $\xi(\pi(h)v)=\chi(h)\xi(v)$ for all $h\in H$ and $v\in\V$. The representations $\pi$ of $G$ with $\Hom_H(\pi,\chi)\neq 0$ are called $(H,\chi)$-distinguished, in particular $H$-distinguished if $\chi$ is the trivial character $\mathds{1}$.

        Distinguished representations play a central role in harmonic analysis. We briefly explain the motivation for studying them. Distinguished representations in $\widehat{G}$, the unitary dual of $G$, form the basic building blocks for harmonic analysis on the homogeneous space $H\backslash G$. Indeed, the unitary representation $L^{2}(H\backslash G)$ of $G$ decomposes as a direct integral \[L^{2}(H\backslash G) \;=\; \int_{\widehat{G}}^{\oplus} \pi \, d\mu(\pi),\] and the support of the associated Plancherel measure $\mu$ is contained in the class of $H$-distinguished representations.

         We now formulate the distinction problem in our setting. Let $\F$ be a non-Archimedean local field of characteristic zero with finite residue field, and let $\D$ be a quaternion division algebra over $\F$. For $n \ge 1$, set $\G_n=\GL_n(\D)$. 
         We write $\mathds{1}_n$ for the trivial representation of $\G_n$.
         Let $p$ and $q$ be two nonnegative integers with $p+q=n$.  Let $\Ha_{p,q}$ denote the subgroup of $\G_n$ given by $\left\{\diag(g_1,g_2):\;g_1\in \G_p,\;g_2\in \G_{q}\right\}.$
        
          For $G = \G_n$ and $H = \Ha_{p,q}$, the elements of the space $\Hom_H(\pi,\mathds{1})$ are called local linear periods. In this article, we formulate a conjecture concerning the existence of linear periods with respect to $\Ha_{1,n-1}$ for all irreducible smooth representations of $\G_n$ with $n>2$, and verify it in low-rank cases.

          The problem of existence and uniqueness of such linear periods has been widely investigated. In the case $G=\GL_n(\F)$ and $H=\GL_p(\F)\times\GL_q(\F)$, Jacquet and Rallis~\cite[Theorem~1.1]{jacquet1996uniqueness} established the uniqueness of linear periods. Chen and Sun~\cite[Theorem B]{chen2020uniqueness} subsequently studied the uniqueness of twisted linear periods for $\GL_{2n}(\F)$ with respect to characters $\chi$ of $\GL_n(\F)\times\GL_n(\F)$.
          An interesting problem, therefore, is to characterize irreducible representations of $\GL_n(\F)$ that admit nonzero linear periods with respect to $\GL_p(\F)\times\GL_q(\F)$ with $p+q=n$. Yang \cite[Theorem $1.2$]{yang2022linear} examined the linear periods with respect to $\mathrm{GL}_n(\mathrm{F}) \times \mathrm{GL}_n(\mathrm{F})$ for all unitary representations of $\GL_{2n}(\F)$. Subsequently, the irreducible smooth representations of $\GL_4(\F)$ having a linear period with respect to $\GL_2(\F)\times\GL_2(\F)$  were classified by Sharma \cite[Theorem $1$]{hariom}.

         In the context of quaternion division algebra, Anandavardhanan et al. \cite[Theorem 2.5]{anandavardhanan2024sign} studied the problem of linear periods, providing a proof for their uniqueness. Their result states that if $\pi$ is an irreducible representation of $\G_n$ and  $\Ha_{p,q}$ is a subgroup of $\G_n$ with $|p-q|<2$. Then 
         $$\dim \Hom_{\Ha_{p,q}}(\pi,\mathds{1}) \le 1.$$ 
         Moreover, when $|p-q|\ge 2$, this multiplicity-one result holds for both irreducible unitary representations and generic representations (whose Jacquet-Langland transfer is generic). In fact, if $\pi$ is generic and $|p-q|\ge 2$, then $\dim \Hom_{\Ha_{p,q}}(\pi,\mathds{1}) = 0$. They also proved that, for $n>2$, no discrete series representation of $\G_n$ is $\Ha_{1,n-1}$-distinguished.
         More recently, Lu \cite{Lu26} explored the generalized linear period of an irreducible smooth representation $\pi$ of $\G_n$ with respect to $\Ha_{p,q}$, where $p+q=n$.
         We now aim to study all irreducible smooth  representations of $\G_n$ that admits a linear period with respect to $\Ha_{1,n-1}$. 
        
        To begin, we examine the case \( n = 2 \). We refer the reader to Section~\ref{pre} for any unexplained notation. It has been established that if an irreducible representation \(\pi\) of \(\G_2\) is \(\Ha_{1,1}\)-distinguished then \(\pi \simeq \widetilde{\pi}\) (see \cite[Theorem 3.2]{raghu}). The classification of irreducible representations of $\G_2$ that are $\Ha_{1,1}$-distinguished has been established by {Raghuram~\cite[Theorem 1.1]{raghuram2007restriction} and Gan--Takeda~\cite[Theorem 8.6]{gan}. The list includes the trivial representation $\mathds{1}_2$ and $\chi\,\St_2$, where $\chi$ is a character of $\D^{\ast}$ satisfying $\chi|_{(\D^{\ast})^2}=1$. In addition, for a self-dual representation $\sigma$ of $\D^{\ast}$ with $\dim(\sigma)>1$, exactly one of $\St_2(\sigma)$ and $\Sp_2(\sigma)$ is $\Ha_{1,1}$-distinguished. More precisely, $\St_2(\sigma)$ (resp.\ $\Sp_2(\sigma)$) is $\Ha_{1,1}$-distinguished when the central character $\omega_{\sigma}\circ\Nrd_{\D/\F}$ is non-trivial (resp. trivial)(\cite[Corollary~5]{DP}).
        A supercuspidal representation has linear periods with respect to $\Ha_{1,1}$ if the central character is trivial. Finally, the classification also contains the representation $\sigma\times\widetilde{\sigma}$, where $\sigma$ is an irreducible representation of $\D^{\ast}$. 
        In the case $n>2$, we propose the following conjecture, which is the main result of this article.

\begin{conj}\label{conj}
       An irreducible smooth representation $\theta$ of $\G_n$ with $n>2$ is $\Ha_{1,n-1}$-distinguished if and only if  $\theta$ is either $\mathds{1}_n$ or of the form $\mathds{1}_{n-2} \times \tau,$ where $\tau$ is an infinite-dimensional irreducible representation of $\G_2$ which  is $\Ha_{1,1}$-distinguished.
\end{conj}

As a consequence of Conjecture~\ref{conj}, one can deduce the multiplicity-one property for all irreducible smooth representations. More precisely, we have the following corollary.

\begin{corollary}\label{corr}
    Let $\theta$ be an irreducible smooth representation of $\G_n$. Then 
    $\dim \Hom_{\Ha_{1,n-1}}(\theta, \mathds{1}) \leq 1$. 
\end{corollary}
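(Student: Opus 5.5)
Assuming Conjecture~\ref{conj}, we only need to bound $\dim \Hom_{\Ha_{1,n-1}}(\theta,\mathds{1})$ for the representations in its list; for every other irreducible $\theta$ the space is zero. For $n=2$ the bound $\le 1$ is the case $|p-q|=0<2$ of \cite[Theorem 2.5]{anandavardhanan2024sign}, so we take $n>2$. Two cases remain: $\theta=\mathds{1}_n$, and $\theta=\mathds{1}_{n-2}\times\tau$ with $\tau$ an infinite-dimensional $\Ha_{1,1}$-distinguished irreducible representation of $\G_2$.

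If $\theta=\mathds{1}_n$, any $\Ha_{1,n-1}$-invariant form on the one-dimensional space is a scalar multiple of a fixed one. So the dimension is exactly $1$.

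For $\theta=\mathds{1}_{n-2}\times\tau$ we use Mackey theory. The space $\Hom_{\Ha_{1,n-1}}(\theta,\mathds{1})$ injects into the corresponding space for the full parabolically induced representation $\Ind_{P}^{\G_n}(\mathds{1}_{n-2}\otimes\tau)$, where $P$ is the standard parabolic of type $(n-2,2)$; this is where irreducibility of $\theta$ is used. We then filter the restriction of this induced representation to $\Ha_{1,n-1}$ along the finitely many double cosets $P\backslash\G_n/\Ha_{1,n-1}$. These are indexed by how the line $\D e_1$ sits relative to the flag, so there are essentially two orbits, closed and open, plus the intermediate data. Each orbit contributes a space of the form
\[
\Hom_{P^x}\bigl(\mathds{1}_{n-2}\otimes\tau,\ \delta_x\bigr),
\qquad P^x=x^{-1}Px\cap\Ha_{1,n-1},
\]
for a suitable modulus character $\delta_x$. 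These reduce to linear-period or Whittaker-type problems for $\G_2$ and $\G_{n-2}$, namely $\Hom_{\Ha_{1,1}}(\tau,\mathds{1})$ or $\Hom_{\G_1\times\G_{n-3}}(\mathds{1}_{n-2},\delta)$ twisted by $\Hom_{\G_1}(\tau|_{\text{Levi}},\cdot)$. We aim to show the following:
\begin{itemize}
\item exactly one orbit contributes, with space $\Hom_{\Ha_{1,1}}(\tau,\mathds{1})$, which has dimension $\le 1$ by \cite[Theorem 2.5]{anandavardhanan2024sign};
\item on every other orbit the modulus characters $\delta_x$ are nontrivial on the centre of a $\G_1$-factor, while $\mathds{1}$ and the exponents of $\tau$ (via its Jacquet module) fail to match. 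These mismatches use that $\tau$ is infinite-dimensional, and that its Jacquet module is either zero (cuspidal case) or $\sigma\otimes\widetilde\sigma$ up to twist.
\end{itemize}
Since the dimension is subadditive along the filtration, $\dim\le 1$ follows.

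The main obstacle is the exponent bookkeeping on the non-contributing orbits. There the Jacquet module of $\tau$ can produce characters $\nu^{\pm 1/2}$-type twists, and these might coincide with $\delta_x$; $\tau=\St_2$-twists are the dangerous case. I would first try to settle such coincidences by a central character argument on the $\G_1$ factor of $\Ha_{1,n-1}$, using the exact exponents $\nu^{s}$ attached to $\St_2(\sigma)$ and $\Sp_2(\sigma)$. If a coincidence survives, the fallback is to realize $\theta$ instead as a quotient of the opposite-order induced representation $\tau\times\mathds{1}_{n-2}$ and rerun the Mackey analysis there, where the offending exponent has the opposite sign.
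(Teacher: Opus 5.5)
Your route is the paper's: assume Conjecture~\ref{conj}, handle $\mathds{1}_n$ trivially, and for $\theta=\mathds{1}_{n-2}\times\tau$ use the three orbits of Lemma~\ref{lem3} (two closed, one open) with $k=n-2$, so that only closed orbit (I) survives. The gap is that you also run this argument for $n=3$, and there your first bullet is false. For $n=3$, $k=1$, the $\G_{k-1}$-factor is $\G_0$, so nothing can mismatch there. The open-orbit condition (3) of Lemma~\ref{lem3} then reduces to $r_{(1,1)}(\tau)\twoheadrightarrow\mathds{1}_1\otimes\mathds{1}_1$. This holds for $\tau=\mathds{1}_1\times\mathds{1}_1$, which is on the list: it is $\sigma\times\widetilde\sigma$ with $\sigma=\mathds{1}_1$. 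Hence for the irreducible $\theta=\mathds{1}_1\times\mathds{1}_1\times\mathds{1}_1$ both closed orbit (I) and the open orbit can contribute, and the orbit-by-orbit bound no longer gives $1$. Your fallback fails in the same way: for $\tau\times\mathds{1}_1$ (so $k=2$) the open-orbit condition is again $r_{(1,1)}(\tau)\twoheadrightarrow\mathds{1}_1\otimes\mathds{1}_1$. No exponent bookkeeping repairs this. The paper instead settles $n=3$ directly by \cite[Theorem~2.5]{anandavardhanan2024sign}, since $|1-2|<2$. You should do the same; you already use that theorem for $n=2$.

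For $n\ge 4$ your plan goes through. The vanishing you single out as the main obstacle is in fact immediate and does not involve the exponents of $\tau$ at all. Closed orbit (II) needs $\pi_2=\tau$ to be the character $\nu^{-(k-2)}$ of $\G_2$, which is impossible for infinite-dimensional $\tau$. In orbit (III), the $\G_{n-3}$-component of $r_{(n-3,1)}(\mathds{1}_{n-2})=\nu^{-1}\otimes\nu^{n-3}$ would have to equal $\nu^{n-k-1}=\nu$, which fails. So there are no dangerous $\St_2$-coincidences and no need to reorder the induction. One caution on the surviving term: by Lemma~\ref{lem3}(1) it is $\Hom_{\Ha_{1,1}}(\tau,\chi_{(n-2)/2})$, a twisted period. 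By contrast, \cite[Theorem~2.5]{anandavardhanan2024sign} as quoted concerns only the trivial character. You therefore need a multiplicity-one statement for $\Ha_{1,1}$ that covers this character; the paper cites \cite[Theorem~6.1]{raghukiri} at this point. Finally, $\theta=\mathds{1}_{n-2}\times\tau$ is the induced representation itself, so no injection step is needed.
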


       In the case $\D=\F$, the above conjecture specializes to \cite[Corollary A.2.2]{Smith17}, which is a reformulation of the results of Prasad \cite[Theorem~2]{Pra93} and Venketasubramanian \cite[Corollary~6.15]{Ven13}. We verify Conjecture \ref{conj} in the cases $n=3$ and $n=4$ (see Theorem \ref{t1} and Theorem \ref{t2}, respectively).

\subsection*{Strategy of the proof}
        We outline the strategy for verifying the Conjecture \ref{conj} for $n=3, 4$. Since irreducible supercuspidal representations of $\G_n$ for $n>2$ are not $\Ha_{1,n-1}$-distinguished, it suffices to consider irreducible non-supercuspidal representations, which arise as quotients of parabolically induced representations of the form $\pi=\pi_1 \times \pi_2$, where $\pi_1 \in \Irr(\G_k)$ and $\pi_2 \in \Irr(\G_{n-k})$ with $1 \le k \le n-1$. Using Mackey theory for the restriction of parabolically induced representations, we first obtain a comprehensive list of representations that may be \(\Ha_{1,n-1}\)-distinguished. These representations are not necessarily irreducible.  We then determine their irreducible subquotients (up to permutation of the inducing data) explicitly by using  the Zelevinsky and Langlands classifications.  Finally, we analyze the \(\Ha_{1,n-1}\)-distinction of the relevant subquotients, which leads to a complete classification of irreducible \(\Ha_{1,n-1}\)-distinguished representations of \(\G_n\).

\subsection*{Organization of the paper} 
            This paper is organized as follows. In Section~\ref{pre}, we fix the notations and recall basic facts on representations of $\G_n$, together with known results on $\Ha_{1,n-1}$-distinguished representations. We also derive the necessary conditions for $\Ha_{1,n-1}$-distinction by using geometric lemma. In Sections~\ref{G_3} and~\ref{G_4}, we study $\Ha_{1,n-1}$-distinguished representations of $\G_n$ in the cases $n=3$ and $n=4$, respectively. The multiplicity-one property for the pair $(\G_n,\Ha_{1,n-1})$ is established in Section~\ref{sec5}.

\section{Notations and Preliminaries}\label{pre}

     \noindent Let \(\F\) be a non-Archimedean local field of characteristic zero and let \(\D\) be a quaternion division algebra over \(\F\) with \(\dim_{\F} \D=4\). For \(n\geq 0\), set \(\G_n=\GL_n(\D)\). We denote by \(\Nrd_{\D/\F}:\G_n\to \F^\times\), the reduced norm map, and define \(\nu_n(g)=|\Nrd_{\D/\F}(g)|_{\F},\ g\in \G_n\). When the index is clear from the context, we simply write \(\nu\) instead of \(\nu_n\). Let \(p,q\geq 0\) with \(p+q=n\), and put \(\delta_{p,q}=\diag(I_p,-I_q)\). Consider the inner automorphism of $\G_n$ given by $$\theta := \Ad(\delta_{p,q}) : g \mapsto \delta_{p,q}\, g \, \delta_{p,q}^{-1}.$$ Its fixed-point subgroup is $\Ha_{p,q}=\left\{\diag(g_1,g_2):\;g_1\in \G_p,\;g_2\in \G_{q}\right\}.$ For \(s\in \mathbb{R}\), let $\alpha_s:\F^\times \rightarrow\mathbb{C}^\times$ denote the character defined by $\alpha_s(x)=|x|_{\F}^{\,2s},~x\in\F^\times.$ Then, we define corresponding character of \(\Ha_{p,q}\), given by
\[
\chi_s(\diag(g_1,g_2))
=
\alpha_s\left(\Nrd_{\D/\F}(g_1)\Nrd_{\D/\F}(g_2)^{-1}\right)
=
\nu(g_1)^{2s}\nu(g_2)^{-2s},
\qquad
g_1\in \G_p,\ g_2\in \G_q.
\]

\subsection{Representations of $\G_n$}
         
          In this section, we review the necessary background on the representation theory of $\G_n$. The results that we recall are well known and our main reference is \cite{bern76}. Let $\Rep(\G_n)$ denote the category of smooth complex representations of $\G_n$ of finite length. We write $\Irr(\G_n)$ for the set of equivalence classes of irreducible representations and $\mathcal{C}(\G_n)$ for the subset of supercuspidal representations. We shall often use the abbreviation s.c. for supercuspidal. If $\pi \in \Rep(\G_n)$, we denote its contragredient by $\widetilde{\pi}$. All representations considered in this article are smooth.

          Throughout this article, we write $\G$ in place of $\G_n$ whenever there is no ambiguity in the value of $n$. Let $\Par$ be a parabolic subgroup of $\G$ with Levi decomposition $\Par = \M \N$, where $\M$ is a Levi subgroup and $\N$ is the unipotent radical. Any representation $\rho$ of $\M$ may be inflated to a representation of $\Par$, still denoted by $\rho$, by letting $\N$ act trivially. We define the normalized parabolically induced representation of $\G$ by $ \Ind_{\Par}^{\G}(\delta_{\Par}^{1/2} \otimes \rho)$, where $\delta_{\Par}$ denote the modular character of $\Par$.  Let $\alpha=(n_1,\ldots,n_r)$ be a partition of $n$ and $\M_{\alpha}\cong\G_{n_1}\times\cdots\times\G_{n_r}$ be the corresponding Levi subgroup of $\G_n$. Then any representation of $\M_{\alpha}$ is of the form $\rho_1\otimes\cdots\otimes\rho_{n_r}$, where $\rho_i\in\Rep({\G_{n_i}})$ for $i=1,\ldots,r$. Define $\rho_1\times\cdots\times\rho_{n_r}$ as the representation $\Ind_{\Par_{\alpha}}^{\G_n}(\rho_1\otimes\cdots\otimes\rho_{n_r})$. 
          
          We now recall the Jacquet functor, left adjoint to the parabolic induction functor $\Ind_{\Par}^{\G}$. Let $(\pi,V)$ be a representation of $\G$. Define the subspace $V(\N) := \Span\{\pi(n)v - v : n \in \N,\ v \in V\}$. Set $V_{\N} := V / V(\N)$. The action of $\Par$ on $V_{\N}$ is given by $\pi_{\N}(p)(v + V(\N)) := \delta_{\Par}^{-1/2}(p)\,\pi(p)v + V(\N)$, for $p \in \Par$. Since the modular character $\delta_{\Par}$ is trivial on $\N$, the representation $(\pi_{\N},V_{\N})$ factors through $\Par/\N$ and may therefore be regarded as a representation of the Levi subgroup $\M \simeq \Par/\N$. The representation $(\pi_{\N},V_{\N})$ is called the normalized Jacquet module of $\pi$. For Levi subgroup $\M=\G_l\times\G_{n-l}$, we denote the Jacquet module of $\pi$ by $r_{(l,n-l)}(\pi)$.

\subsection{Zelevinsky and Langlands classification } 

          The results presented in this subsection follow \cite{tadic1990induced,ming}. For any $\rho\in\mathcal{C}(\G)$, let $s(\rho)$ denote the smallest non-negative real number such that the induced representation $\rho \times \nu^{s(\rho)}\rho$ is reducible. Then, $s(\rho)$ is an integer dividing $2$. We write $\nu_\rho := \nu^{s(\rho)}$. Note that if $\rho \in \mathcal{C}(\G_1)$, then $s(\rho)=2$ when $\dim(\rho)=1$ and $s(\rho)=1$ otherwise, whereas if $\rho' \in \mathcal{C}(\G_2)$, then $s(\rho')=1$. Let $\rho \in \mathcal{C}(\G)$ and $a,b \in \mathbb{Z}$ with $a \le b$. The set $\Delta:= [a,b]_{(\rho)} = \{\nu_\rho^{a}\rho,\, \nu_\rho^{a+1}\rho,\, \ldots,\, \nu_\rho^{b}\rho\}$ is called a segment of irreducible cuspidal representations. We call $a$ and $b$ the beginning and  ending of $\Delta$, respectively. We denote by $l(\Delta) := b-a+1$, the length of  segment $\Delta$. The contragredient of $\Delta$ is defined by $\widetilde{\Delta} := [-b, -a]_{(\widetilde{\rho})}$. 
\begin{definition}
         Let $\Delta_1 = [a_1,b_1]_{(\rho)} $ and $\Delta_2 = [a_2,b_2]_{(\rho)} $ be two segments. We say that $\Delta_1$ and $\Delta_2$ are linked if $\Delta_1 \nsubseteq \Delta_2$, $\Delta_2 \nsubseteq \Delta_1$, and $\Delta_1 \cup \Delta_2$ is also a segment.  We say that $\Delta_1$ precedes $\Delta_2$ if $a_1<a_2$, $b_1<b_2$, and $a_2 \le b_1+1$. 
\end{definition}

        Segments provide a systematic way to construct new representations of $\G$, forming the basis of the Zelevinsky and Langlands classifications. To each segment $\Delta = [a,b]_{(\rho)}$, we associate an irreducible representation $Z(\Delta)$ (resp. $L(\Delta)$), defined as the unique irreducible subrepresentation (resp. quotient) of $\nu_\rho^{a}\rho \times \nu_\rho^{a+1}\rho \times \cdots \times \nu_\rho^{b}\rho$. For instance, if $\Delta=\big[-\frac{(n-1)}{2},\frac{n-1}{2}\big]_{(\mathds{1}_1)}$, we have $Z(\Delta)=\mathds{1}_n$, the trivial representation of $\G_n$, and $L(\Delta)=\St_n$, the Steinberg representation of $\G_n$. While for $\Delta = \big[-\frac{(n-1)}{2}, \frac{n-1}{2}\big]_{(\sigma)}$ with $\sigma \in \Irr(\G_1)$ and $\dim(\sigma)>1$, we obtain $Z(\Delta)=\Sp_n(\sigma)$, the Speh representation of $\G_n$, and $L(\Delta)=\St_n(\sigma)$, the generalized Steinberg representation of $\G_n$.

        Let $\mathcal{S}$ denote the set of all segments in $\mathcal{C}(\G)$, and let $\mathcal{M}(\mathcal{S})$ be the set of all finite multi sets of elements. Given a multi-set of segments $\mathfrak{a} = \{\Delta_1, \ldots, \Delta_s\}$, we define $\lambda(\mathfrak{a}) := Z(\Delta_1) \times \cdots \times Z(\Delta_s)$. If for every $i<j$, the segment $\Delta_i$ does not precede $\Delta_j$, then $Z(\Delta_1,\ldots,\Delta_s)$ denotes the unique irreducible subrepresentation of $\lambda(\mathfrak{a})$, independent of the ordering of the segments. Similarly, setting $\pi(\mathfrak{a}) := L(\Delta_1) \times \cdots \times L(\Delta_s)$, the unique irreducible quotient of $\pi(\mathfrak{a})$ is denoted by $L(\Delta_1,\ldots,\Delta_s)$. 
        
        An elementary operation on $\mathfrak{a}\in \mathcal{M}(\mathcal{S})$ consists of replacing a pair of linked segments $\{\Delta_1,\Delta_2\}$ in $\mathfrak{a}$ with the pair $\{\Delta_1\cup\Delta_2,\;\Delta_1\cap\Delta_2\}$. We define a partial order on $\mathcal{M}(\mathcal{S})$ by declaring $\mathfrak{b} \le \mathfrak{a}$ if $\mathfrak{b}$ can be obtained from $\mathfrak{a}$ through a finite sequence of elementary operations. Now, we recall some results from \cite{ming} that are key tools for the explicit analysis of subquotients of representations of the form $\lambda(\mathfrak{a})$ of $\G_n$.

\begin{lemma}[{\cite[Lemma 5.12]{ming}}]\label{l1}
        Let $\Delta_1$ and $\Delta_2$ be two segments, and set $\pi = Z(\Delta_1)\times Z(\Delta_2)$.

        \noindent\upshape(1) The representation $\pi$ is irreducible if and only if $\Delta_1$ and $\Delta_2$ are not linked. 

        \noindent\upshape(2) If $\Delta_1$ and $\Delta_2$ are linked, then $\pi$ has length $2$. Furthermore, when $\Delta_2$ precedes $\Delta_1$, then $\pi$ admits a unique irreducible subrepresentation $Z(\Delta_1,\Delta_2)$ and a unique irreducible quotient $Z(\Delta_1 \cup \Delta_2)\times Z(\Delta_1 \cap \Delta_2)$. When $\Delta_1$ precedes $\Delta_2$, then $\pi$ admits a unique irreducible subrepresentation $Z(\Delta_1 \cup \Delta_2)\times Z(\Delta_1 \cap \Delta_2)$ and a unique irreducible quotient $Z(\Delta_1,\Delta_2)$.
\end{lemma}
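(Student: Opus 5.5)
This statement is quoted from M\'inguez--S\'echerre~\cite[Lemma 5.12]{ming}, so the plan is to retrace the standard proof for $\GL_n$ over a local non-Archimedean division algebra. The tools are the Bernstein--Zelevinsky geometric lemma for Jacquet modules of products, together with the Jacquet modules of a single segment representation:
\[
r_{(k,\,l(\Delta)\deg\rho-k)}\bigl(Z([a,b]_{(\rho)})\bigr)=Z([a,a+j-1]_{(\rho)})\otimes Z([a+j,b]_{(\rho)})
\]
when $k=j\deg\rho$, and zero otherwise. This formula holds because $\nu_\rho$ plays the role of $\nu$ in Zelevinsky's theory: by definition $\rho\times\nu_\rho\rho$ reduces, and $\rho\times\nu_\rho^{x}\rho$ is irreducible for $x\neq\pm1$.

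\textbf{Irreducibility when the segments are unlinked.}
First compute the semisimplified Jacquet module of $\pi=Z(\Delta_1)\times Z(\Delta_2)$ along the minimal Levi attached to the cuspidal support. Then show that any irreducible subquotient containing a fixed "leading" exponent, namely the term $Z(\Delta_1)\otimes Z(\Delta_2)$ in $r(\pi)$, must contain every other exponent. The mechanism is the intertwining operator $Z(\Delta_1)\times Z(\Delta_2)\to Z(\Delta_2)\times Z(\Delta_1)$. When the segments are not linked, this operator is an isomorphism: one checks it is nonzero and injective by a Jacquet-module multiplicity count. Indeed, Frobenius reciprocity gives
\[
\dim\Hom(\pi,\pi)=1,
\]
and $\pi$ is both a quotient of $Z(\Delta_2)\times Z(\Delta_1)$ and a submodule of it. Combining this with the fact that $Z(\Delta_1)\otimes Z(\Delta_2)$ occurs with multiplicity one in $r(\pi)$ forces $\pi$ to be irreducible. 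Segments that are disjoint and not juxtaposed reduce to the case of different cuspidal lines, where irreducibility is immediate.

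\textbf{Length two when the segments are linked.}
Assume $\Delta_2$ precedes $\Delta_1$. Exhibit the two subquotients explicitly:
\begin{itemize}
\item $Z(\Delta_1\cup\Delta_2)\times Z(\Delta_1\cap\Delta_2)$, which is irreducible by the unlinked case since $\Delta_1\cap\Delta_2\subseteq\Delta_1\cup\Delta_2$;
\item $Z(\Delta_1,\Delta_2)$.
\end{itemize}
Compare their Jacquet modules with $r(\pi)$ using the formula above. The Jacquet module of $\pi$ is exactly the sum of the two, so the length is exactly $2$.

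For the sub/quotient assertions, use that $Z(\Delta_1,\Delta_2)$ is by definition the unique irreducible subrepresentation of $Z(\Delta_1)\times Z(\Delta_2)$ in this ordering. To see that the other factor is the quotient, embed it into $Z(\Delta_2)\times Z(\Delta_1)$ via
\[
Z(\Delta_1\cup\Delta_2)\hookrightarrow Z(\Delta_2)\times Z(\Delta_1\setminus\Delta_2)
\]
and Frobenius reciprocity. Taking contragredients, or the symmetric ordering, gives the statement for $\Delta_1$ preceding $\Delta_2$.

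\textbf{Main obstacle.}
The hardest step is the multiplicity-one/combinatorial control of the Jacquet module, which shows the length is at most $2$ rather than larger. This requires the precise value of $s(\rho)$, so that $\nu_\rho$-shifts behave exactly as in Zelevinsky's $\GL_n(\F)$ theory. That input comes from the Jacquet--Langlands/Tadi\'c analysis already recalled above.
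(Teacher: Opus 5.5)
The paper gives no proof of this lemma; it is quoted from M\'{\i}nguez--S\'echerre. Your outline therefore has to stand on its own, and at two points it does not.

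The first gap is the irreducibility argument for unlinked segments. Write $\Delta_i=[a_i,b_i]_{(\rho)}$. The facts you invoke are $\dim\Hom(\pi,\pi)=1$, nonzero intertwining maps in both directions between $Z(\Delta_1)\times Z(\Delta_2)$ and $Z(\Delta_2)\times Z(\Delta_1)$, and multiplicity one of $Z(\Delta_1)\otimes Z(\Delta_2)$ in the Jacquet module. All of these hold equally when $\Delta_2$ precedes $\Delta_1$: a nonempty initial piece of $\Delta_2$ contains $\nu_\rho^{a_2}\rho$ with $a_2<a_1$, so only one term of the geometric lemma contributes. In that configuration $\pi$ is reducible.

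So these facts cannot yield irreducibility. Unlinkedness would have to enter through injectivity of the intertwining operator. The multiplicity count you cite for that cannot work, because the same counts hold in the linked case, where the operator is not injective.

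Moreover, the multiplicity-one claim is false for some unlinked pairs. If $\Delta_1=\Delta_2$, or more generally $\Delta_1\subseteq\Delta_2$ with $a_1=a_2$, both extreme terms of the geometric lemma contribute $Z(\Delta_1)\otimes Z(\Delta_2)$. Frobenius reciprocity then only gives $\dim\operatorname{End}(\pi)\le 2$. Separately, $[0]_{(\rho)}$ and $[2]_{(\rho)}$ are disjoint, not juxtaposed, and on the same cuspidal line, so that case does not reduce to different lines.

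What is needed is an argument that genuinely separates the two configurations. One option is an induction on $l(\Delta_1)+l(\Delta_2)$, using embeddings such as $Z(\Delta_1)\hookrightarrow\nu_\rho^{a_1}\rho\times Z([a_1+1,b_1]_{(\rho)})$ and Jacquet modules. The aim is to show that the socle $\sigma$ of $\pi$ also embeds in $Z(\Delta_2)\times Z(\Delta_1)$. Contragredience combined with an order-reversing involution of Gelfand--Kazhdan type then makes $\sigma$ a quotient of $\pi$. If $\sigma$ occurs once, it splits off, forcing $\pi=\sigma$. The case $\Delta_1=\Delta_2$, where no ordering has a multiplicity-one leading term, needs separate treatment.

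The second gap is the length-two claim for linked segments. You cannot \emph{compare} the Jacquet module of $Z(\Delta_1,\Delta_2)$ with that of $\pi$. $Z(\Delta_1,\Delta_2)$ is defined only as the socle of $\pi$, and the single-segment formula says nothing about its Jacquet module.

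Set $\tau_1=Z(\Delta_1\cup\Delta_2)\times Z(\Delta_1\cap\Delta_2)$. The real content is that $r(\pi)-r(\tau_1)$ cannot be shared among two or more irreducible constituents, and that $\tau_1$ occurs only once. That is precisely the step you set aside as the \emph{main obstacle}. Knowing $s(\rho)$ does not settle it: once everything is written with $\nu_\rho$, the only input is that $\nu_\rho^{x}\rho\times\nu_\rho^{y}\rho$ reduces iff $|x-y|=1$.

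One way to close it works with the Jacquet modules $r_{\min}$ along the minimal Levi. Check that the exponents in $r_{\min}(\pi)-r_{\min}(\tau_1)$ occur with multiplicity one and are connected by transpositions of adjacent entries $\nu_\rho^{x}\rho,\nu_\rho^{y}\rho$ with $|x-y|\neq1$. For irreducible $\sigma$, any exponent in $r_{\min}(\sigma)$ gives an embedding of $\sigma$ into the corresponding induced representation, and such adjacent factors commute. Hence a constituent containing one of these exponents contains all of them.

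Once length two is known, the sub/quotient assertions are immediate. $r(Z(\Delta_1,\Delta_2))$ surjects onto $Z(\Delta_1)\otimes Z(\Delta_2)$ by Frobenius reciprocity, while $r(\tau_1)$ does not contain it, so $\tau_1\not\cong Z(\Delta_1,\Delta_2)$. The constituent other than the unique irreducible subrepresentation must then be the unique irreducible quotient. Your embedding of $\tau_1$ into $Z(\Delta_2)\times Z(\Delta_1)$ is needed only to see that $\tau_1$ occurs in $\pi$ at all.
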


\begin{lemma}[{\cite[Corollary~5.15]{ming}}]\label{l2}
       For $\mathfrak{a}, \mathfrak{b} \in \mathcal{M}(\mathcal{S})$, the representation $Z(\mathfrak{b})$ occurs as a subquotient of $\lambda(\mathfrak{a})$ if and only if $\mathfrak{b} \le \mathfrak{a}$.
\end{lemma}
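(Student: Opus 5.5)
The plan is to prove the two implications separately, working throughout in the Grothendieck group $\mathcal{R}=\bigoplus_n \mathcal{R}(\G_n)$ of finite-length representations. Parabolic induction is exact, so $\times$ descends to a commutative product on $\mathcal{R}$. For $\pi,\pi'\in\Rep(\G_n)$, write $\pi\le\pi'$ if the semisimplification of $\pi'$ minus that of $\pi$ is a nonnegative combination of irreducibles.

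\emph{``If'' direction.} Suppose $\mathfrak{a}'$ is obtained from $\mathfrak{a}$ by a single elementary operation on a linked pair $\{\Delta_1,\Delta_2\}\subset\mathfrak{a}$. By Lemma~\ref{l1}(2), $Z(\Delta_1\cup\Delta_2)\times Z(\Delta_1\cap\Delta_2)$ is a constituent of $Z(\Delta_1)\times Z(\Delta_2)$. When $\Delta_1\cap\Delta_2=\emptyset$ we read $Z(\emptyset)$ as the trivial representation of $\G_0$. Inducing with the remaining segments of $\mathfrak{a}$, and using commutativity of $\times$ in $\mathcal{R}$ together with exactness of induction, gives $\lambda(\mathfrak{a}')\le\lambda(\mathfrak{a})$. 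Iterating along a chain of elementary operations from $\mathfrak{a}$ down to $\mathfrak{b}$ yields $\lambda(\mathfrak{b})\le\lambda(\mathfrak{a})$. Since $Z(\mathfrak{b})$ is a subrepresentation of $\lambda(\mathfrak{b})$ once the segments are suitably ordered, it is a subquotient of $\lambda(\mathfrak{a})$.

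\emph{``Only if'' direction.} I would argue by induction on the total length $\sum_{\Delta\in\mathfrak{a}} l(\Delta)$, imitating Zelevinsky's proof of his Theorem~7.1.
\begin{enumerate}
\item First reduce to a single cuspidal line $\{\nu_\rho^k\rho\}_{k\in\mathbb{Z}}$. Segments on different lines give irreducible products, and elementary operations never mix lines, so it suffices to treat one line.
\item Let $e$ be the largest ending among the segments of $\mathfrak{a}$. Apply the Jacquet functor $r_{(n-m,m)}$, where $m$ is the sum of the degrees of the cuspidals $\nu_\rho^{e}\rho$ counted with multiplicity in the cuspidal support. Compute the relevant component of $r_{(n-m,m)}\lambda(\mathfrak{a})$ by the geometric lemma, via the Leibniz-type rule for Jacquet modules of $Z(\Delta_1)\times\cdots\times Z(\Delta_s)$: its $\G_m$-part is a power of $\nu_\rho^{e}\rho$, and its $\G_{n-m}$-part is $\lambda(\mathfrak{a}^-)$, where $\mathfrak{a}^-$ is $\mathfrak{a}$ with the top element removed from the segments ending at $e$.
\item Do the same for $Z(\mathfrak{b})$. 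Its corresponding Jacquet component contains $Z(\mathfrak{b}^-)$, and a multiplicity count of the exponent $e$ controls how many segments of $\mathfrak{b}$ end at $e$.
\item Apply the induction hypothesis to get $\mathfrak{b}^-\le\mathfrak{a}^-$. Then show combinatorially that this inequality, together with the constraint on the number of segments ending at $e$, lifts to $\mathfrak{b}\le\mathfrak{a}$.
\end{enumerate}

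The main obstacle is the ``only if'' direction, specifically steps 2 and 3. One must check that the exponent $e$ is maximal, so that no other constituent of the Jacquet module contributes to the chosen component. This is where $s(\rho)$ and $\nu_\rho$, which differ from the split case, enter.

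A cleaner alternative avoids this bookkeeping. Use the Sécherre--Stevens type-theoretic equivalence, which identifies the Bernstein block of $\G_n$ containing $\rho$ with a block of $\GL_k(E)$ for some extension $E$, sending $\nu_\rho$ to $\nu$ and segments to segments. This equivalence is compatible with parabolic induction and Jacquet functors. Under it, $Z(\mathfrak{a})$ and $\lambda(\mathfrak{a})$ correspond to their split counterparts, and Zelevinsky's original theorem transfers verbatim. I would try this route first, and fall back on the direct Jacquet-module induction only if compatibility with the $Z$-parametrization proves delicate.
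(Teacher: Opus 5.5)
The paper gives no proof of this lemma; it imports it from \cite[Corollary~5.15]{ming}. Your ``if'' direction is fine. The gap is in the direct Jacquet-module induction for the ``only if'' direction: Steps~3 and~4 are both false, for the same underlying reason.

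\emph{Step 3 fails.} Take $\rho\in\mathcal{C}(\G_1)$ and $\mathfrak{b}=\{[0]_{(\rho)},[1]_{(\rho)}\}$. Since $[0]_{(\rho)}$ precedes $[1]_{(\rho)}$, $Z(\mathfrak{b})$ is the unique irreducible subrepresentation of $\nu_\rho\rho\times\rho$. Likewise $Z([0,1]_{(\rho)})$ is the unique irreducible subrepresentation of $\rho\times\nu_\rho\rho$. These are the two constituents of $\rho\times\nu_\rho\rho$, so Frobenius reciprocity and the geometric lemma give $r_{(1,1)}Z(\mathfrak{b})=\nu_\rho\rho\otimes\rho$. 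Here $e=1$, and the component you want, with $\nu_\rho\rho$ in the $\G_m$-slot, is zero. Yet $Z(\mathfrak{b})$ is certainly a subquotient of $\lambda(\mathfrak{b})$. In general, only segments ending at $e$ that are not matched with a preceding segment ending at $e-1$ can release $\nu_\rho^{e}\rho$ into the $\G_m$-slot. So you cannot strip all $k$ copies, and the component need not contain $Z(\mathfrak{b}^-)\otimes(\nu_\rho^{e}\rho)^{\times k}$. Your multiplicity count also carries no information: equal cuspidal supports and maximality of $e$ already force both $\mathfrak{a}$ and $\mathfrak{b}$ to have exactly $k$ segments ending at $e$.

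\emph{Step 4 fails.} Take $\mathfrak{a}=\{[0,1]_{(\rho)}\}$ and the same $\mathfrak{b}$. Then $\mathfrak{a}^-=\mathfrak{b}^-=\{[0]_{(\rho)}\}$, and each has one segment ending at $e=1$. But $\mathfrak{b}\not\le\mathfrak{a}$, because $\mathfrak{a}$ has no linked pair. The data you carry through the induction forgets how the top cuspidal is attached to the rest of its segment, and that is exactly what $\le$ records.

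\emph{A workable induction.} Peel off a whole segment instead. Choose $\Delta\in\mathfrak{b}$ with maximal ending. It precedes no segment, so you may place it first: $Z(\mathfrak{b})\hookrightarrow Z(\Delta)\times Z(\mathfrak{b}\setminus\{\Delta\})$. Hence $Z(\Delta)\otimes Z(\mathfrak{b}\setminus\{\Delta\})$ is a quotient of a Jacquet module of $Z(\mathfrak{b})$, with $Z(\Delta)$ on the \emph{left}. Now look at the geometric-lemma filtration of the same Jacquet module of $\lambda(\mathfrak{a})$. By cuspidal support alone, the lower pieces must tile $\Delta$ as a chain of juxtaposed initial pieces $[x_j,x_{j+1}-1]$ of distinct segments $[x_j,y_j]\in\mathfrak{a}$, the last being a whole segment ending at $e$. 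Apply induction to the upper pieces. Then use the elementary fact that replacing $\{[x_j,y_j]\}_{j=0}^{t}$ by $[x_0,e]$ together with the $[x_{j+1},y_j]$ is a sequence of elementary operations. This gives $\mathfrak{b}\le\mathfrak{a}$. In the example above it correctly excludes $\mathfrak{b}$, since $\nu_\rho\rho\otimes\rho$ is not a constituent of $r_{(1,1)}\lambda(\mathfrak{a})=\rho\otimes\nu_\rho\rho$.

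\emph{Your type-theoretic alternative.} It is a legitimate route, but all of its content lies in the compatibilities you list. The block equivalence must commute with normalized induction and Jacquet functors for the correct parabolics. It must send $\nu_\rho$ to $\nu_E$. Products across different cuspidal lines must be irreducible. These must be cited precisely rather than assumed. As written, your fallback argument does not go through.
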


        Next, we recall some results from \cite{tadic1990induced} that serve as key tools for the explicit analysis of subquotients of representations of the form $\pi(\mathfrak{a})$ of $\G_n$.

\begin{lemma}[{\cite[Proposition~4.3]{tadic1990induced}}]\label{l3}
        Let $\Delta_1$ and $\Delta_2$ be two segments and set $\pi = L(\Delta_1)\times L(\Delta_2)$. 

        \noindent\upshape(1) The representation $\pi$ is irreducible if and only if $\Delta_1$ and $\Delta_2$ are not linked. 

        \noindent\upshape(2) If $\Delta_1$ and $\Delta_2$ are linked, then $\pi$ has length $2$. Moreover,  when $\Delta_2$ precedes $\Delta_1$, then $\pi$ admits a unique irreducible subrepresentation $L(\Delta_1 \cup \Delta_2)\times L(\Delta_1 \cap \Delta_2)$ and a unique irreducible quotient $L(\Delta_1,\Delta_2)$. When $\Delta_1$ precedes $\Delta_2$, then $\pi$ admits a unique irreducible subrepresentation $L(\Delta_1,\Delta_2)$ and a unique irreducible quotient $L(\Delta_1 \cup \Delta_2)\times L(\Delta_1 \cap \Delta_2)$.
\end{lemma}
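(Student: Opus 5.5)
The plan is to deduce this from Lemma~\ref{l1} using the Zelevinsky--Aubert involution $\mathrm{D}$ on the Grothendieck group $\mathcal{R}=\bigoplus_n \mathcal{R}(\G_n)$. For inner forms of $\GL_n$ this involution is available through Aubert's construction, and it is the tool Tadi\'c and M\'inguez--S\'echerre use. I will use three of its properties.
\begin{enumerate}
\item It is a ring automorphism of $\mathcal{R}$ for the product $\times$.
\item It sends irreducible representations to irreducible ones, up to sign.
\item It satisfies $\mathrm{D}(Z(\Delta))=L(\Delta)$ and, more generally, $\mathrm{D}(Z(\mathfrak{a}))=L(\mathfrak{a}')$ for some multisegment $\mathfrak{a}'$. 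For a single segment, $Z(\Delta)$ and $L(\Delta)$ are the two extreme constituents of $\nu_\rho^a\rho\times\cdots\times\nu_\rho^b\rho$, and $\mathrm{D}$ exchanges them (Jacquet module support argument).
\end{enumerate}
Applying $\mathrm{D}$ to the semisimplification of $Z(\Delta_1)\times Z(\Delta_2)$ shows that $L(\Delta_1)\times L(\Delta_2)$ has the same length. Lemma~\ref{l1}(1) and (2) then give part (1) and the length-$2$ assertion of part (2) immediately.

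Next I identify the two constituents when $\Delta_1,\Delta_2$ are linked. The segments $\Delta_1\cup\Delta_2$ and $\Delta_1\cap\Delta_2$ are not linked, since one contains the other. So by part (1), already proved, $L(\Delta_1\cup\Delta_2)\times L(\Delta_1\cap\Delta_2)$ is irreducible, and it equals $\mathrm{D}\bigl(Z(\Delta_1\cup\Delta_2)\times Z(\Delta_1\cap\Delta_2)\bigr)$, hence is a constituent. For the other constituent, I use that $L(\Delta_1,\Delta_2)$ is by definition the unique irreducible quotient of $L(\Delta_1)\times L(\Delta_2)$ in Langlands order. It is not isomorphic to $L(\Delta_1\cup\Delta_2)\times L(\Delta_1\cap\Delta_2)$, because the Langlands classification is a bijection with multisegments and $\{\Delta_1,\Delta_2\}\neq\{\Delta_1\cup\Delta_2,\Delta_1\cap\Delta_2\}$. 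Hence the two constituents are exactly $L(\Delta_1,\Delta_2)$ and $L(\Delta_1\cup\Delta_2)\times L(\Delta_1\cap\Delta_2)$.

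For the sub/quotient statement, suppose first that $\Delta_2$ precedes $\Delta_1$. Then the central exponents are in decreasing order, so $L(\Delta_1)\times L(\Delta_2)$ is a standard module and has $L(\Delta_1,\Delta_2)$ as its unique irreducible quotient. With only two constituents, the remaining one, $L(\Delta_1\cup\Delta_2)\times L(\Delta_1\cap\Delta_2)$, must be the unique irreducible subrepresentation. The module is non-split precisely because the quotient is unique.

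For the case where $\Delta_1$ precedes $\Delta_2$, I pass to contragredients. Since $\widetilde{L(\Delta)}=L(\widetilde{\Delta})$ and $\widetilde{\pi_1\times\pi_2}\cong\widetilde{\pi_1}\times\widetilde{\pi_2}$, the contragredient of $L(\Delta_1)\times L(\Delta_2)$ is $L(\widetilde{\Delta}_1)\times L(\widetilde{\Delta}_2)$. Here $\widetilde{\Delta}_2$ precedes $\widetilde{\Delta}_1$, because taking $\widetilde{\ }$ negates and reverses the endpoints. Applying the previous case to $L(\widetilde{\Delta}_1)\times L(\widetilde{\Delta}_2)$ and dualizing back swaps sub and quotient. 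Combined with $\widetilde{L(\widetilde{\Delta}_1,\widetilde{\Delta}_2)}=L(\Delta_1,\Delta_2)$, this gives the claimed structure.

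The main obstacle is the input about $\mathrm{D}$: that it is multiplicative, preserves irreducibility up to sign, and exchanges $Z(\Delta)$ with $L(\Delta)$ for the division algebra $\D$, where $\nu_\rho=\nu^{s(\rho)}$ with $s(\rho)\in\{1,2\}$. I would take this from Aubert's general theory together with M\'inguez--S\'echerre's results. A second point needing care is to confirm that the Langlands-quotient normalization of $L(\Delta_1,\Delta_2)$ matches the ordering ``$\Delta_2$ precedes $\Delta_1$'' as used above.
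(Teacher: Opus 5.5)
Your proof is correct, but it follows a different route from the paper. The paper gives no argument for this lemma: it quotes Tadi\'c, where the statement is proved directly for products of essentially square-integrable representations by Jacquet-module computations, in the manner of Zelevinsky's treatment of $Z(\Delta_1)\times Z(\Delta_2)$. You instead transport Lemma~\ref{l1} through the Aubert involution $\mathrm{D}$.

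Your division of labour is right. Since $\mathrm{D}$ acts only on the Grothendieck group, it yields part (1), the length, and the constituent $L(\Delta_1\cup\Delta_2)\times L(\Delta_1\cap\Delta_2)$, but nothing about socle and cosocle. You correctly take that from an independent input: uniqueness of the Langlands quotient of the standard module $L(\Delta_1)\times L(\Delta_2)$ when $\Delta_2$ precedes $\Delta_1$, then contragredients for the other ordering.

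The normalization you worried about does match the paper's conventions. For example, $\nu_{\mathds{1}_1}^{1/2}\times\nu_{\mathds{1}_1}^{-1/2}$ has quotient $\mathds{1}_2=L([\tfrac12]_{(\mathds{1}_1)},[-\tfrac12]_{(\mathds{1}_1)})$ and subrepresentation $\St_2$.

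Three small points:
1. One only has $\mathrm{D}(Z(\Delta))=\pm L(\Delta)$. This is harmless, because the sign is constant on the constituents of a fixed product.
2. Your distinctness argument uses that the irreducible product $L(\Delta_1\cup\Delta_2)\times L(\Delta_1\cap\Delta_2)$ is its own Langlands quotient $L(\Delta_1\cup\Delta_2,\Delta_1\cap\Delta_2)$. This is immediate, but worth saying.
3. The identity $\widetilde{L(\widetilde{\Delta}_1,\widetilde{\Delta}_2)}\cong L(\Delta_1,\Delta_2)$ is not really needed. Once the two non-isomorphic constituents and the unique quotient are known, the socle is forced.

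As for what each route buys: yours is short and uses the same duality underlying the Badulescu--Renard form of the M{\oe}glin--Waldspurger algorithm that the paper already relies on. However, it presupposes the Zelevinsky-side theory for $\G_n$ (Lemma~\ref{l1}) and the fact that Aubert duality exchanges $Z(\Delta)$ and $L(\Delta)$ for $\G_n$. Tadi\'c's argument stays on the Langlands side and needs neither.
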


\begin{lemma}[{\cite[Proposition~4.4]{tadic1990induced}}]\label{l4}
          For $\mathfrak{a}, \mathfrak{b} \in \mathcal{M}(\mathcal{S})$, the representation $L(\mathfrak{b})$ occurs as a subquotient of $\pi(\mathfrak{a})$ if and only if $\mathfrak{b} \le \mathfrak{a}$.
\end{lemma}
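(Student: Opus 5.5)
The plan is to mirror Zelevinsky's proof of Lemma~\ref{l2} on the Langlands side. We work in the Grothendieck group $R=\bigoplus_n R(\G_n)$, with the product induced by $\times$ and the coproduct given by the semisimplified Jacquet modules $r_{(l,n-l)}$. Both implications are reduced to combinatorics of segments: the ``if'' direction via Lemma~\ref{l3} and exactness of parabolic induction, and the ``only if'' direction via the geometric lemma.

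\emph{The ``if'' direction.} We induct on the number of elementary operations taking $\mathfrak{a}$ to $\mathfrak{b}$. For a single operation, replace linked $\Delta_1,\Delta_2\in\mathfrak{a}$ by $\Delta_1\cup\Delta_2,\Delta_1\cap\Delta_2$, and let $\mathfrak{a}_0=\mathfrak{a}\setminus\{\Delta_1,\Delta_2\}$. By Lemma~\ref{l3}(2), $L(\Delta_1\cup\Delta_2)\times L(\Delta_1\cap\Delta_2)$ is a subquotient of $L(\Delta_1)\times L(\Delta_2)$. Since induction is exact and $\times$ is commutative in $R$, $\pi(\mathfrak{b})\le \pi(\mathfrak{a})$ in $R$. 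Iterating gives $\pi(\mathfrak{b})\le\pi(\mathfrak{a})$ whenever $\mathfrak{b}\le\mathfrak{a}$. Since $L(\mathfrak{b})$ is a quotient of $\pi(\mathfrak{b})$ with the segments ordered suitably, $L(\mathfrak{b})$ is a subquotient of $\pi(\mathfrak{a})$.

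\emph{The ``only if'' direction.} Here we argue with cuspidal exponents. First, every subquotient of $\pi(\mathfrak{a})$ has the same cuspidal support as $\mathfrak{a}$, so $\mathfrak{b}$ and $\mathfrak{a}$ have the same support. We then reduce to segments on a single cuspidal line $\{\nu_\rho^k\rho\}$, since different lines split off as irreducible products. Order the segments of $\mathfrak{b}$ so that the standard module $\pi(\mathfrak{b})=L(\Delta'_1)\times\cdots\times L(\Delta'_t)$ has $L(\mathfrak{b})$ as its Langlands quotient. Then $L(\mathfrak{b})$ embeds into the opposite-ordered product. By Frobenius reciprocity, its Jacquet module along the corresponding Levi contains the irreducible $L(\Delta'_t)\otimes\cdots\otimes L(\Delta'_1)$.

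Next we choose the Levi adapted to the first segment of $\mathfrak{b}$ in the order given by ``ending'': take $\Delta'$ with maximal end and, among those, the longest. We compute the Jacquet module of $\pi(\mathfrak{a})$ along $\G_{l(\Delta')\deg\rho}\times\G_{\ast}$ via the geometric lemma. The Jacquet modules of $L(\Delta)$ are explicit: $r(L([a,b]))$ consists of $L([c+1,b])\otimes L([a,c])$. Hence each constituent has the form $L(\text{upper pieces})\otimes\pi(\text{lower pieces})$. Matching the first tensor factor with $L(\Delta')$ forces $\Delta'$ to be assembled from ``tops'' of segments of $\mathfrak{a}$. We then show that the remaining multisegment $\mathfrak{a}^\flat$, obtained by removing those tops, satisfies $\mathfrak{a}^\flat\ge$ a multisegment from which $\mathfrak{a}$ is recovered up to $\le$. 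Finally we apply induction on $\deg$ to $L(\mathfrak{b}\setminus\{\Delta'\})\le\pi(\mathfrak{a}^\flat)$.

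\emph{Main obstacle.} The hard step is this last combinatorial claim. If a multisegment obtained by cutting tops of segments of $\mathfrak{a}$, and gluing them into $\Delta'$, dominates $\mathfrak{b}\setminus\{\Delta'\}$, we must show $\mathfrak{b}\le\mathfrak{a}$. This is the analogue of Zelevinsky's Theorem~7.1 / Lemma~\ref{l2}. I would handle it by an exchange argument: use elementary operations in $\mathfrak{a}$ to first merge the tops realising $\Delta'$ into a single segment, checking at each stage that the merged segments are linked. This linkage is guaranteed by the maximality of the end of $\Delta'$. After that, reduce to the inductive hypothesis.
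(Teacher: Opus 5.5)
The paper does not prove this lemma; it quotes it from Tadi\'c. So I am judging your argument on its own terms. Your ``if'' direction is fine. The ``only if'' direction has a genuine gap: the segment you isolate does not match the Jacquet-module block you isolate it in.

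Your own Frobenius reciprocity step puts the \emph{lowest} segment $\Delta'_t$ of $\mathfrak b$ in the first tensor factor. You then match the first factor of $r(\pi(\mathfrak a))$, which consists of products of $L$ of tops, with $L(\Delta')$ for $\Delta'$ of \emph{maximal end}. Such a constituent need not occur in $r(L(\mathfrak b))$ at all. For $\mathfrak b=\{[0],[1]\}$ one has $L(\mathfrak b)=Z([0,1]_{(\rho)})$, whose Jacquet module is $\rho\otimes\nu_\rho\rho$.

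Worse, the combinatorial claim you call the main obstacle is false for this choice. Take $\mathfrak a=\{[0,2],[1,3]\}$. Cut the tops $[2]$ and $[3]$ and glue them into $\Delta'=[2,3]$, leaving $\mathfrak a^\flat=\{[0,1],[1,2]\}$, and put $\mathfrak b=\mathfrak a^\flat+\{\Delta'\}$. Then $\Delta'$ is the segment of $\mathfrak b$ with maximal end, and $\mathfrak b\setminus\{\Delta'\}\le\mathfrak a^\flat$ trivially. Yet $\mathfrak b\not\le\mathfrak a$, because elementary operations never increase the number of segments. Your exchange argument fails accordingly: the elementary operation on $[0,2],[1,3]$ gives $\{[0,3],[1,2]\}$, not the cut-and-glued multisegment. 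Maximality of the end does not force the linkages you need.

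The repair is to pair the extreme segment with the right block.
\begin{enumerate}
\item Take $\Delta'=[a',b']$ with \emph{minimal beginning}. No segment of $\mathfrak b$ precedes it, so $L(\mathfrak b)\hookrightarrow L(\Delta')\times L(\mathfrak b\setminus\{\Delta'\})$.
\item Frobenius reciprocity then places $L(\Delta')\otimes L(\mathfrak b\setminus\{\Delta'\})$ in some term $\bigl(L(T_1)\times\cdots\times L(T_s)\bigr)\otimes\bigl(L(B_1)\times\cdots\times L(B_s)\bigr)$ of the geometric-lemma expansion of $r(\pi(\mathfrak a))$. Here $T_i$ and $B_i$ are the top and bottom of the $i$-th segment of $\mathfrak a$.
\item Comparing cuspidal supports, the nonempty $T_i$ are disjoint with union $[a',b']$. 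Since $a'$ is the minimum of the common support (after your reduction to one line), the top containing $a'$ is an entire segment $[a',e_0]$ of $\mathfrak a$.
\item The other tops are $[e_{j-1}+1,e_j]$, cut from segments $[s_j,e_j]$ with $a'\le s_j\le e_{j-1}+1$.
\item Glue successively. If $s_j>a'$, the segments $[a',e_{j-1}]$ and $[s_j,e_j]$ are linked. Their union is $[a',e_j]$ and their intersection is exactly the bottom $[s_j,e_{j-1}]$. If $s_j=a'$, the step changes nothing. Hence $\mathfrak a^\flat+\{\Delta'\}\le\mathfrak a$.
\item Induction on the degree gives $\mathfrak b\setminus\{\Delta'\}\le\mathfrak a^\flat$. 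Since $\le$ is stable under adding a segment, $\mathfrak b\le\mathfrak a$.
\end{enumerate}
Symmetrically, you can keep the segment with maximal end but isolate it in the \emph{last} block, where bottoms appear.
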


\subsection{M{\oe}glin-Waldspurger algorithm }\label{walds}
    
          In this subsection, we recall M{\oe}glin-Waldspurger algorithm.  Badulescu and Renard~\cite[\S\,1]{BR07} extend the M{\oe}glin--Waldspurger algorithm (see \cite[II.1 and II.2]{MW89}) for computing the Zelevinsky dual to the Langlands framework for irreducible representations of $\G_n$. The M{\oe}glin--Waldspurger algorithm associates to a multisegment $\mathfrak{a}$, a multisegment $\mathfrak{a}^t$ such that $Z(\mathfrak{a}) \simeq L(\mathfrak{a}^t)$, and conversely which we summarize below.

          If $\Delta=[a,b]_{(\rho)}$ is a segment, we set $\Delta^-=[a,,\ldots,b-1]_{(\rho)}$, with the convention that $\Delta^-$ is void when $a=b$. Let $\Delta_1=[a_1,b_1]_{(\rho)}$ and $\Delta_2=[a_2,b_2]_{(\rho)}$ be two segments. We  write $\Delta_1 \ge \Delta_2$ if $a_1>a_2$, or if $a_1=a_2$ and $b_1\ge b_2$. This defines a total order on the set of segments. A multisegment $\mathfrak{a}=(\Delta_1,\Delta_2,\ldots,\Delta_t)$ is said to be ordered if $\Delta_1 \ge \Delta_2 \ge \cdots \ge \Delta_t$. The lexicographic order induces a total order on ordered multisegments: if $\mathfrak{a}=(\Delta_1,\ldots,\Delta_t)$ and $\mathfrak{a}'=(\Delta'_1,\ldots,\Delta'_{t'})$ are ordered multisegments, then $\mathfrak{a}\ge \mathfrak{a}'$ if $\Delta_1>\Delta'_1$, or $\Delta_1=\Delta'_1$ and $\Delta_2>\Delta'_2$, and so on, or if $t\ge t'$ and $\Delta_i=\Delta'_i$ for all $1\le i\le t'$.

         Let $\mathfrak{a}$ be a multisegment. We associate to $\mathfrak{a}$ a multisegment $\mathfrak{a}^t$ as follows. Let $d$ be the largest ending of a segment in $\mathfrak{a}$. Choose a segment $\Delta_{i_0}$ in $\mathfrak{a}$ containing $d$ and maximal with this property. Define integers $i_1,i_2,\ldots,i_r$ inductively so that $\Delta_{i_s}$ is a segment of $\mathfrak{a}$ preceding $\Delta_{i_{s-1}}$, with ending $d-s$, maximal with these properties, and $r$ is maximal for which this construction is possible. Set $\mathfrak{a}^-=(\Delta_1^-,\Delta_2^-,\ldots,\Delta_t^-)$, omitting void segments. For each index $i$, define $\Delta_i'=\Delta_i$ if $i \notin \{i_0,i_1,\ldots,i_r\}$, and $\Delta_i'=\Delta_i^-$ if $i \in \{i_0,i_1,\ldots,i_r\}$. Then the segment $\{d-r,d-r+1,\ldots,d\}$ is the first segment of $\mathfrak{a}^t$. Repeating for $\mathfrak{a}^-$ the same procedure used for $\mathfrak{a}$, we obtain the next segment of $\mathfrak{a}^t$, and continuing inductively we see that $\mathfrak{a}^t$ is the multiset union of the segment $\{d-r,d-r+1,\ldots,d\}$ and $(\mathfrak{a}^-)^t$. The multisegment $\mathfrak{a}^t$ is independent of the choices made in the construction. Moreover, the map $\mathfrak{a}\mapsto \mathfrak{a}^t$ is an involution on the set of non-void multisegments.

\subsection{Some known results on $ \Ha_{1,n-1}$-distinction}  
         In this subsection, we recall several results related to $\Ha_{1,n-1}$-distinction. We begin with the following lemma, whose proof is parallel to that of \cite[Lemma~3.1]{yang2022linear}.

\begin{lemma}\label{cont} If $\pi \in \Irr(\G_n)$ is $\Ha_{1,n-1}$-distinguished, then $\widetilde{\pi}$ is also $\Ha_{1,n-1}$-distinguished.

\end{lemma}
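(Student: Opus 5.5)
The plan is to use the Gelfand--Kazhdan style involution trick, as in \cite[Lemma~3.1]{yang2022linear}. The key input is the standard fact that for $\pi\in\Irr(\G_n)$ one has $\widetilde{\pi}\simeq \pi\circ\iota$, where $\iota(g)={}^{t}\!\bar g^{-1}$ is an anti-involution-twisted automorphism. Over $\D$ we cannot use the plain transpose, since it is not an anti-automorphism of $\GL_n(\D)$. Instead we take the transpose composed with the canonical involution $x\mapsto\bar x$ of $\D$. This map $g\mapsto {}^{t}\bar g$ is an anti-automorphism of $\G_n$, so $\iota(g)=({}^{t}\bar g)^{-1}$ is an automorphism of $\G_n$.

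First I will recall, or cite as known for inner forms (via Harish-Chandra's character and the fact that $\iota$ preserves conjugacy classes up to inversion), that $\pi^{\iota}:=\pi\circ\iota\simeq\widetilde{\pi}$ for every irreducible $\pi$. The argument is a character identity. The character $\Theta_\pi$ is a conjugation-invariant locally integrable function. For a regular semisimple $g$, the element ${}^{t}\bar g$ is conjugate to $g$, because both have the same reduced characteristic polynomial and regular semisimple classes in $\G_n$ are determined by it. Hence $\Theta_{\pi\circ\iota}(g)=\Theta_\pi(g^{-1})=\Theta_{\widetilde\pi}(g)$, and linear independence of characters gives the isomorphism.

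Second, I will check that $\iota$ stabilizes $\Ha_{1,n-1}$. This holds because $\iota(\diag(g_1,g_2))=\diag(\iota(g_1),\iota(g_2))$, and $\iota$ restricts to a bijection of $\Ha_{1,n-1}$. Consequently, if $\ell\in\Hom_{\Ha_{1,n-1}}(\pi,\mathds 1)$ is nonzero, the same functional $\ell$ on the same space satisfies $\ell(\pi(\iota(h))v)=\ell(v)$ for all $h\in \Ha_{1,n-1}$. So $\ell$ is $\Ha_{1,n-1}$-invariant for $\pi^{\iota}$, and $\pi^\iota\simeq\widetilde\pi$ is $\Ha_{1,n-1}$-distinguished.

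The main obstacle is the first step over the division algebra: justifying that ${}^{t}\bar g$ is conjugate to $g$, equivalently that $\iota$ acts on regular semisimple classes as inversion. I would handle it through reduced characteristic polynomials. The map $g\mapsto{}^{t}\bar g$ preserves the reduced characteristic polynomial, which can be checked after base change to a splitting field, where it becomes a transpose composed with an inner automorphism. Then, by Skolem--Noether applied to the commutative semisimple algebra $\F[g]$, regular semisimple elements of $\G_n$ with equal reduced characteristic polynomial are conjugate. Density of regular semisimple elements and local integrability of characters finish the argument.
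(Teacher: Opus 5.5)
Your proposal is correct and is essentially the paper's argument: the paper only says the proof is parallel to \cite[Lemma~3.1]{yang2022linear}, i.e.\ the Gelfand--Kazhdan-type trick of realizing $\widetilde{\pi}$ as $\pi\circ\iota$ for an involution $\iota$ with $\iota(\Ha_{1,n-1})=\Ha_{1,n-1}$, and your choice $\iota(g)=({}^{t}\bar g)^{-1}$, justified through characters, is the natural way to carry that over to $\D$. One small precision: when $\F[g]$ is a product of fields rather than a field, the classical Skolem--Noether theorem does not apply verbatim. Conjugacy of regular semisimple elements with the same reduced characteristic polynomial $P$ instead follows because the equal polynomials force isomorphic module structures on $\D^{n}$ over $\F[x]/(P)\otimes_{\F}\D$.
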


Next, we recall the results of Anandavardhanan et al.~\cite[Theorem 3.8 and Theorem C.3]{anandavardhanan2024sign} concerning the $\Ha_{1,n-1}$-distinction of discrete series representations and their products.
\begin{theorem}\label{anand}
\noindent\upshape(1) For $n>2$, no discrete series representation of $\G_n$ is $\Ha_{1,n-1}$-distinguished.
    
  \noindent\upshape(2) Let $\pi_1$ and $\pi_2$ be two discrete series representations such that $\pi_1\times\pi_2$ is an irreducible representation of $\G_n$ with $n\geq4$. Then $\pi_1\times\pi_2$ is not $\Ha_{1,n-1}$-distinguished.
\end{theorem}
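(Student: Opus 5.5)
The plan is to treat both parts by the same method: realize the representation inside a parabolically induced representation, analyse that induced representation with Mackey theory (the geometric lemma) for the double cosets $\Par\backslash\G_n/\Ha_{1,n-1}$, and then use Casselman-type exponent conditions for discrete series to rule out every orbit.

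For (1), I would argue by induction on $n$. Let $\pi=L(\Delta)$ with $\Delta=[a,b]_{(\rho)}$ and $\rho\in\mathcal C(\G_m)$, $m\mid n$. If $m=n$, then $\pi$ is supercuspidal and I would use the known non-distinction of supercuspidals for $n>2$ (equivalently, a cuspidal $\Ha_{1,n-1}$-period would be a compactly supported relative matrix coefficient, which forces $\Ha_{1,n-1}$ to be a symmetric subgroup of "equal rank" type, and it is not once $n>2$). Otherwise, write $\pi\hookrightarrow L(\Delta_1)\times L(\Delta_2)$ with $\Delta=\Delta_1\sqcup\Delta_2$ cut at a $\rho$-block. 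By Frobenius reciprocity, an $\Ha_{1,n-1}$-period on $\pi$ then extends (exactness of $\Hom_{\Ha}(\cdot,\mathds 1)$ fails in general, so instead I would use $\pi$ as a quotient of the opposite product) to a nonzero period on a standard module $L(\Delta_2)\times L(\Delta_1)$. The double cosets $\Par_{(k,n-k)}\backslash\G_n/\Ha_{1,n-1}$ are parametrized by the position of a $k$-dimensional $\D$-subspace relative to the decomposition $\D\oplus\D^{n-1}$. There are only three: the subspace contains the line, lies in the hyperplane, or is in general position.

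By the geometric lemma, each orbit contributes a space of the following shape.
\begin{enumerate}
\item The two closed-type orbits give $\Hom_{\Ha_{1,k-1}\times\G_{n-k}}$ (respectively $\Hom_{\G_k\times\Ha_{1,n-k-1}}$) of the inducing data, twisted by a power of $\nu$ coming from $\delta_{\Par}$ and $\delta_{\Par\cap\Ha}$. Nonvanishing forces one factor to be a character twisted by $\nu^{s}$ with a specified sign of $s$, together with a distinction condition on the other factor.
\item The open orbit gives a period of $r_{(1,k-1)}$ or $r_{(k-1,1)}$ of the factors against a mirabolic-type subgroup. Here the Jacquet modules of $L(\Delta_i)$ are again explicit segment representations.
\end{enumerate}
I would then compare the central exponents forced in each case with the Casselman criterion for $\pi$, namely that all exponents along $\Delta$ are strictly positive in the dominant direction. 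The aim is to show that each orbit either needs an exponent of the wrong sign or needs a discrete series of some $\G_{n'}$ with $2<n'<n$ to be $\Ha_{1,n'-1}$-distinguished, which is excluded by induction. The cases $n'\le 2$ must be checked by hand against the $n=2$ classification recalled above: only $\mathds 1_2$, $\chi\St_2$, $\St_2(\sigma)$ or $\Sp_2(\sigma)$, and cuspidals with trivial central character occur.

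For (2), I would run the same Mackey analysis on $\pi_1\times\pi_2\cong\pi_2\times\pi_1$, with $\pi_i\in\Irr(\G_{n_i})$ discrete series. The two closed orbits force $\pi_1$ or $\pi_2$ to be $\Ha_{1,n_i-1}$-distinguished, up to a twist, or to be a character of $\G_1$. Using both orders of the product, each such condition comes with opposite signs of the twisting exponent. Combined with unitarity of the central characters of discrete series, this gives a contradiction unless the twist is trivial and a factor is distinguished. By (1), that can only happen for $n_i\le 2$. The open orbit demands a period of $\pi_1\otimes\pi_2$ along the graph-type subgroup, and I would show it reduces to $\widetilde{\pi_1}\cong$ a constituent of a Jacquet module of $\pi_2$, which is impossible unless $n_1=n_2$ and $\pi_2\cong\widetilde{\pi_1}$ up to the shift.

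The main obstacle I expect is the residual small cases in (2) with $n\ge4$. These are, for instance, $n_1=1$ with $\pi_1$ a character, or $n_1=n_2=2$ with $\pi_2\cong\widetilde{\pi_1}$. Here the orbit contributions do not vanish individually, and one must show that the resulting candidate functional on the closed orbit does not extend to the whole induced representation. My first attempt would be to use the exact sequence of the orbit filtration and a Bernstein-centre/central-character argument, showing that the relevant $\mathrm{Ext}^1$ vanishes and that the twist by $\nu^{\pm s}$ from the modulus characters is nontrivial exactly because $n\ge4$ (for $n=3$ it is trivial, consistent with $\mathds 1_1\times\tau$ occurring in the conjecture).
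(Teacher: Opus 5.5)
\textbf{There are genuine gaps.} The paper does not prove this statement. It quotes it from \cite{anandavardhanan2024sign} (Theorems 3.8 and C.3) and invokes it exactly where Lemma~\ref{lem3} is inconclusive. You propose to derive it from that same orbit analysis, and the argument fails at precisely those places.

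\textbf{Part (1).} The non-cuspidal case with $n\ge 4$ does go through, and more simply than you suggest. Cut $L(\Delta)$ at $k=2$, or at $k=m$ if $\rho\in\mathcal{C}(\G_m)$ with $m\ge 2$; then all three orbit conditions fail outright and no induction is needed. That matters, because the closed orbits produce twisted periods $(\Ha_{1,n'-1},\chi_s)$ with $s\neq 0$, and your induction hypothesis says nothing about twisted periods. What remains is the real content, and your proposal does not handle it.

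First, the supercuspidal case. Appealing to ``the known non-distinction of supercuspidals'' is circular, since that is a special case of (1). The heuristic you offer is not a theorem. Kato--Takano only give that relative matrix coefficients are compactly supported modulo $Z_G\Ha_{1,n-1}$. The equal-rank criterion is an archimedean phenomenon with no $p$-adic counterpart: in the group case $\Delta G\subset G\times G$, every $\sigma\otimes\widetilde{\sigma}$ with $\sigma$ supercuspidal is distinguished, for every $G$.

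Second, for $n=3$, $\St_3=L([-1,1]_{(\mathds{1}_1)})$ is the unique irreducible quotient of both $\nu_{\mathds{1}_1}^{-1}\times L([0,1]_{(\mathds{1}_1)})$ and $L([-1,0]_{(\mathds{1}_1)})\times\nu_{\mathds{1}_1}$. In each, both closed orbits contribute nothing, while the condition of orbit (III) is satisfied; this is the row $\chi\St_2$ of Table~\ref{tab:placeholder_label}. So no comparison of exponents excludes $\St_3$. You would have to show that the open-orbit functional does not extend to the induced representation. This is exactly why Case (1) of the proof of Theorem~\ref{t1} has to cite Theorem~\ref{anand}.

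\textbf{Part (2).} The case $\min(n_1,n_2)=1$ is not actually an obstacle. Say $n_1=1$ and $\pi_1\times\pi_2\cong\pi_2\times\pi_1$ is distinguished. Lemma~\ref{lem3} applied to $\pi_1\times\pi_2$ forces closed orbit (I), so $\pi_1=\nu^{n-3}$. Applied to $\pi_2\times\pi_1$, it forces closed orbit (II), so $\pi_1=\nu^{-(n-3)}$. These are incompatible for $n\ge 4$. All pairs with $n_1,n_2\ge 2$ are excluded directly, except $\St_2\times\St_2$ in $\G_4$ (Proposition~\ref{p2}(1)), where only orbit (III) contributes.

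There your proposed remedy is backwards. The closed orbits correspond to quotients of the restriction to $\Ha_{1,n-1}$, so a closed-orbit functional always yields an invariant form on the whole induced representation (Remark~\ref{rem1}). There is nothing there that could ``fail to extend.'' The real question is whether the functional on the open-orbit subrepresentation $\pi_{\rm open}$ extends. Showing that the relevant $\mathrm{Ext}^1_{\Ha_{1,n-1}}(\pi_{\rm closed},\mathds{1})$ vanishes would prove that it does extend. That would make $\St_2\times\St_2$ distinguished, the opposite of (2).

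What you need instead is injectivity of the connecting map $\Hom_{\Ha_{1,n-1}}(\pi_{\rm open},\mathds{1})\to\mathrm{Ext}^1_{\Ha_{1,n-1}}(\pi_{\rm closed},\mathds{1})$, or an input of a different nature such as the results of \cite{anandavardhanan2024sign}. The proposal supplies neither, so as it stands it proves neither part.
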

\subsection{Consequence of Geometric Lemma}\label{orbit}
         Let $\G_n=\GL_n(\D)$. For $1\leq k\leq n-1$, let $\Par_{k,n-k}$ denote the standard parabolic subgroup of $\G_n$ corresponding to the partition $(k,n-k)$. 3. In this subsection, we derive necessary conditions for the $\Ha_{1,n-1}$-distinction of $\Ind_{\Par_{k,n-k}}^{\G_n}(\pi)$ via geometric lemma. For this, we need the orbit decomposition of $\Par_{k,n-k}\backslash \G_n/\Ha_{1,n-1}$, whose details may be found in \cite[Section $4$]{yang2022linear} and \cite[\S3.2]{anandavardhanan2024sign}. This decomposition consists of three orbits and using these orbit decompositions we recall the following result from \cite[Corollary 5.2]{yang2022linear}.

\begin{lemma}\label{lem3}
  Let $\pi_1 \in \Rep(\G_k)$ and $\pi_2 \in \Rep(\G_{n-k})$, with $1\leq k\leq n-1$. Suppose the representation $\pi_1 \times \pi_2$ is $\Ha_{1,n-1}$-distinguished. Then at least one of the following conditions holds:
	\begin{enumerate}

    \item[\upshape(1)] $\pi_1=\nu^{n-k-2}$ and $\pi_2$ is $(\Ha_{1,\;n-k-1},\chi_{k/2})\text{-distinguished}$ \textnormal{(closed orbit (I))}.
     
      \item[\upshape(2)]  $\pi_1 \text{ is } (\Ha_{1,\;k-1}, \chi_{(k-n)/2})\text{-distinguished}$  and $\pi_2=\nu^{-(k-2)}$   \textnormal{(closed orbit (II))}.

    \item[\upshape(3)]  $\Hom_{L_{n,k}} ( r_{(k-1,1)}(\pi_1) \otimes r_{(1,n-k-1)}(\pi_2), \, \nu^{n-k-1} \otimes \mathds{1}_{\Delta(\G_1)}
        \otimes \nu^{-(k-1)}) \neq 0$  \textnormal{(orbit (III))}.
	\end{enumerate}
    Here, $L_{n,k} = \G_{k-1} \times \Delta(\G_1) \times \G_{n-k-1}$ with $\Delta(\G_1)=\{(g,g): g\in\G_1\}\subset \G_1\times\G_1.$ The character $\nu^{n-k-1} \otimes \mathds{1}_{\Delta\G_1}
        \otimes \nu^{-(k-1)}$ of $L_{n,k}$ in $(3)$ is given by $(a,g,g,b)\mapsto \nu^{n-k-1}(a)\nu^{-(k-1)}(b)$. Therefore, the contribution from orbit $(\mathrm{III})$ can arise only through irreducible subquotients of $r_{(k-1,1)}(\pi_1)\otimes r_{(1,n-k-1)}(\pi_2)$ of the form $\nu^{n-k-1}\otimes \rho\otimes \widetilde{\rho}\otimes \nu^{-(k-1)}.$
\end{lemma}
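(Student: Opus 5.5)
This follows from Mackey theory (the geometric lemma) applied to the restriction of $\pi_1\times\pi_2$ to $\Ha_{1,n-1}$, using the orbit decomposition of $\Par_{k,n-k}\backslash \G_n/\Ha_{1,n-1}$ recalled above. This decomposition has exactly three orbits: two closed ones, (I) and (II), and one open orbit (III).

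\textbf{Filtration.} Ordering the orbits compatibly with closure relations, the restriction $(\pi_1\times\pi_2)|_{\Ha_{1,n-1}}$ acquires a finite filtration. Its successive subquotients are compactly induced representations
\[
\operatorname{ind}_{\Ha_{1,n-1}\cap \eta^{-1}\Par_{k,n-k}\eta}^{\Ha_{1,n-1}}\bigl((\delta^{1/2}\,\pi_1\otimes\pi_2)^{\eta}\bigr),
\]
one for each orbit representative $\eta$. Since $\Hom_{\Ha_{1,n-1}}(-,\mathds{1})$ is left exact on short exact sequences (contravariant), a nonzero invariant functional on $\pi_1\times\pi_2$ must restrict nontrivially to some graded piece. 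So at least one of the three pieces is $\Ha_{1,n-1}$-distinguished.

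\textbf{Frobenius reciprocity on each orbit.} For each orbit, Frobenius reciprocity for compact induction turns distinction of the piece into distinction of $\pi_1\otimes\pi_2$ (or of its Jacquet module) by the stabilizer $\Ha^\eta=\Ha_{1,n-1}\cap\eta^{-1}\Par\eta$, twisted by the ratio of modular characters $\delta_{\Ha^\eta}^{-1}\delta_{\Par}^{1/2}$.

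For the closed orbit (I), take $\eta=1$. The stabilizer is $\Par_{k,n-k}\cap\Ha_{1,n-1}$, whose Levi part is $\G_1\times\G_{k-1}\times\G_{n-k}$, sitting inside $\G_k\times\G_{n-k}$ via the $\G_{n-1}$-block. Because this stabilizer contains $\G_{k-1}$ diagonally in the $\pi_1$ slot, the unipotent part forces $\pi_1$ to be a character. Computing the modular characters gives $\pi_1=\nu^{n-k-2}$, together with $(\Ha_{1,n-k-1},\chi_{k/2})$-distinction of $\pi_2$. The character twist $\chi_{k/2}$ comes from the exponent $2s$ in $\alpha_s$ combined with $\nu$ being the absolute value of the reduced norm on $\D$, which has degree $2$. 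Orbit (II) is symmetric, giving $\pi_2=\nu^{-(k-2)}$ and the $\chi_{(k-n)/2}$-condition on $\pi_1$.

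For the open orbit (III), the stabilizer meets the unipotent radical, so the Hom factors through the Jacquet modules $r_{(k-1,1)}(\pi_1)\otimes r_{(1,n-k-1)}(\pi_2)$. The reductive part is $L_{n,k}=\G_{k-1}\times\Delta(\G_1)\times\G_{n-k-1}$. The modular-character bookkeeping yields the character $\nu^{n-k-1}\otimes\mathds{1}\otimes\nu^{-(k-1)}$.

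The final remark then follows from a standard argument. A nonzero $L_{n,k}$-equivariant functional on a finite-length representation is nonzero on some irreducible subquotient $\tau_1\otimes\rho\otimes\rho'\otimes\tau_2$. Invariance under $\G_{k-1}$ and $\G_{n-k-1}$ by characters forces $\tau_1$ and $\tau_2$ to be those characters. Invariance under $\Delta(\G_1)$ forces $\rho'\simeq\widetilde\rho$.

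\textbf{Main obstacle.} The main obstacle is the exact modular-character computation in each orbit, which produces the precise exponents $n-k-2$, $k/2$ and so on, and requires identifying explicit orbit representatives. I would take these representatives from \cite[\S3.2]{anandavardhanan2024sign} and follow \cite[Section 5]{yang2022linear} verbatim. The reduced-norm normalization for $\D$ accounts for the halved exponents relative to the split case.
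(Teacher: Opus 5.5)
Your route is the same as the paper's. The paper does not reprove this lemma; it imports it from \cite[Corollary 5.2]{yang2022linear}, which rests on the same three-orbit Mackey analysis you outline: a filtration by the two closed orbits and the open one, Frobenius reciprocity on each piece, and Jacquet modules on the open orbit. Your reduction for orbit (III) to subquotients $\nu^{n-k-1}\otimes\rho\otimes\widetilde{\rho}\otimes\nu^{-(k-1)}$ is also fine. Your closed-orbit step, however, contains a concrete error that would derail the computation if carried out as written.

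\textbf{The orbit of $\eta=1$ gives condition (2), not (1).}
\begin{itemize}
\item For $\eta=1$, the stabilizer $\Ha_{1,n-1}\cap\Par_{k,n-k}$ consists of $\diag(g_1,g_2)$ with $g_2=\left(\begin{smallmatrix} a & b\\ 0 & d\end{smallmatrix}\right)\in\Par_{k-1,n-k}(\G_{n-1})$.
\item Its image in the Levi $\G_k\times\G_{n-k}$ is $\{(\diag(g_1,a),d)\}=\Ha_{1,k-1}\times\G_{n-k}$.
\item Its unipotent part $b$ lies in the unipotent radical of $\Par_{k,n-k}$. It therefore acts trivially and forces nothing; in particular, your ``$\G_{k-1}$ diagonally in the $\pi_1$ slot'' does not make $\pi_1$ a character.
\item It is $\pi_2$ that meets the full group $\G_{n-k}$ and is forced to be a character, while $\pi_1$ is tested against $\Ha_{1,k-1}$.
\item With $\delta_S=\nu(a)^{2(n-k)}\nu(d)^{-2(k-1)}$ and $\delta_{\Par}^{1/2}=\nu(g_1)^{n-k}\nu(a)^{n-k}\nu(d)^{-k}$, one gets $\pi_2=\nu^{-(k-2)}$ and the twist $\chi_{(k-n)/2}$ on $\pi_1$. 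That is condition (2).
\item At $\eta=1$ no $\Ha_{1,n-k-1}$-condition on $\pi_2$ can appear at all.
\end{itemize}

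\textbf{Where condition (1) actually comes from.} It comes from the other closed orbit: the $k$-subspaces contained in the span $W$ of $e_2,\dots,e_n$, with a permutation representative such as $V=\langle e_2,\dots,e_{k+1}\rangle$. There the stabilizer projects onto $\G_k\times\Ha_{1,n-k-1}$. With $\delta_S=\nu(a)^{2(n-k-1)}\nu(d)^{-2k}$ this yields $\pi_1=\nu^{n-k-2}$ and the twist $\chi_{k/2}$ on $\pi_2$.

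Since you treat both closed orbits and call them symmetric, your final list survives. You need to swap the labels and correct the mechanism: what forces a character is a full $\G_k$ or $\G_{n-k}$ in the Levi image of the stabilizer, not its unipotent part.

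\textbf{The open orbit.} Your stated reason for the Jacquet modules is also off. It is not that the stabilizer meets the unipotent radical of $\Par_{k,n-k}$; it does so at $\eta=1$ too, harmlessly. The real reason is that its projection to $\G_k\times\G_{n-k}$ contains the unipotent radicals of $\Par_{k-1,1}$ and $\Par_{1,n-k-1}$. Concretely, this stabilizer is isomorphic to $\Par_{k-1,1,n-k-1}(\G_{n-1})$. Its modular character, combined with $\delta_{\Par}^{-1/2}$ and the Jacquet normalizations, does produce exactly $\nu^{n-k-1}\otimes\mathds{1}\otimes\nu^{-(k-1)}$.
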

\begin{rem}\label{rem1} 
 If $\pi_1\times\pi_2$ satisfies the conditions of closed orbits~(I) and~(II), then $\pi_1\times\pi_2$ is $\Ha_{1,n-1}$-distinguished. 

\end{rem}

\section{Proof of Conjecture \ref{conj}: Case \texorpdfstring{$n=3$}{n=3}}\label{G_3}

       \noindent In this section, we verify Conjecture~\ref{conj} in the case $n=3$. Since irreducible supercuspidal representations of \(\G_3\) are not \(\Ha_{1,2}\)-distinguished by Theorem~\ref{anand}, it suffices to consider non-supercuspidal representations. 
       Any irreducible non-supercuspidal \(\Ha_{1,2}\)-distinguished representation of \(\G_3\) occurs as a quotient of a representation of the form \(\pi_1\times \pi_2\) or \(\pi_2\times \pi_1\), where \(\pi_1\in \Irr(\G_1)\) and \(\pi_2\in \Irr(\G_2)\). Moreover, if a representation admits an \(\Ha_{1,2}\)-distinguished quotient, then the representation itself is \(\Ha_{1,2}\)-distinguished. Therefore, it is enough to analyze the \(\Ha_{1,2}\)-distinguished subquotients of representations of the form $\pi=\pi_1\times \pi_2,~\pi_1\in \Irr(\G_1),~\pi_2\in \Irr(\G_2),$ with \(\pi\) itself being \(\Ha_{1,2}\)-distinguished. Next, we identify all possible choices of \(\pi\) for which \(\Ha_{1,2}\)-distinction occurs.
  
       \begin{rem}
          By the classification of irreducible representations of \(\G_2\), \(\pi_2\) is one of the following: a supercuspidal representation, a character, a twist of the Steinberg representation, a Speh representation, a generalized Steinberg representation, or a principal series representation. The irreducible representations of $\G_2$ that are $\Ha_{1,1}$-distinguished have been classified in the introduction. Note that  \((\Ha_{1,1},\chi_s)\)-distinction is equivalent to \(\Ha_{1,1}\)-distinction for every infinite-dimensional irreducible representation of \(\G_2\).
\end{rem}

      Based on the possible types of the representation $\pi_2$, we apply Lemma~\ref{lem3}  with $n=3$ and $k=1$ to the corresponding representation $\pi$. This allows us to describe the possible structures of $\pi$, arising from the three orbits.
      The closed orbit~(I) yields $\mathds{1}_1 \times \pi_2$, where $\pi_2$ is infinite-dimensional and $(\Ha_{1,1},\chi_{\frac12})$-distinguished.  The closed orbit~(II) gives representations of the form $\nu^{-1}_{\mathds{1}_1} \times Z([0,1]_{(\mathds{1}_1)})$. The third orbit yields representations of the form $\nu_{\mathds{1}_1}\times Z([-1,0]_{(\mathds{1}_1)})$, $\nu^{-1}_{\mathds{1}_1}\times L([0,1]_{(\mathds{1}_1)})$, $\widetilde{\sigma}\times\sigma\times\mathds{1}_1$, and $\sigma\times\mathds{1}_1\times\widetilde{\sigma}$, where $\sigma\in\Irr(\G_1)$ with $\sigma\not\simeq\nu_{\mathds{1}_1}^{\pm1}$, as analyzed in the table below.
      
        \begin{table}[h]
    \centering
    \begin{tabular}{|c|c|c|}
    \hline
    $\pi_2$ & $\pi_1 \otimes r_{(1,1)}(\pi_2)$ & $\pi=\pi_1\times\pi_2$\\ \hline
    $\text{s.c.}$ & No contribution & No representation \\ \hline
    
    $\chi$ & $\pi_1\otimes\nu^{-1}\chi \otimes\nu^{}\chi$ & $\nu_{\mathds{1}_1}\times Z([-1,0]_{(\mathds{1}_1)})$ \\ \hline
    
    $\chi\St_2$ &$ \pi_1\otimes\nu^{}\chi \otimes\nu^{-1}\chi$ & $ \nu_{\mathds{1}_1}^{-1}\times L([0,1]_{(\mathds{1}_1)})$\\ \hline

     $\Sp_2(\sigma)$ &$\pi_1\otimes\nu^{-\frac12}\sigma \otimes\nu^{\frac12}\sigma$ & No representation \\ \hline
    
    $\St_2(\sigma)$  & $\pi_1\otimes\nu^{\frac12}\sigma \otimes\nu^{-\frac12}\sigma$ & No representation \\ \hline
    
 $\sigma_1\times\sigma_2$ & $\pi_1 \otimes (\sigma_1\otimes\sigma_2\bigoplus \sigma_2\otimes\sigma_1)$ & $\widetilde{\sigma}\times\sigma\times\mathds{1}_1$,  $\sigma \times\mathds{1}_1\times \widetilde{\sigma}~ \text{with}~\sigma\not\simeq\nu_{\mathds{1}_1}^{\pm 1}$
    \\ \hline
    \end{tabular}
    \vspace{5mm}
    \caption{Contribution of orbit (III). }
    \label{tab:placeholder_label}
\end{table}  The preceding analysis leads to the following proposition.

\begin{prop}\label{p1}
Let $\theta\in\Irr(\G_3)$ be $\Ha_{1,2}$-distinguished. Then $\theta$ is a subquotient of one of the following representations $\pi$ of $\G_3:$
\begin{enumerate}
    \item[\upshape(1)]
    $\pi =  \nu_{\mathds{1}_1}^{-1}\times L([0,1]_{(\mathds{1}_1)})$.

    \item[\upshape(2)]
    $\pi =  \nu_{\mathds{1}_1}^{-1}\times Z([0,1]_{(\mathds{1}_1)})$.

    \item[\upshape(3)]
    $\pi =  \nu_{\mathds{1}_1}\times Z([-1,0]_{(\mathds{1}_1)})$.

    \item[\upshape(4)]
    $\pi =  \mathds{1}_1\times\sigma\times\widetilde{\sigma}$, where $\sigma\in\Irr(\G_1)~ \text{with}~\sigma\not\simeq\nu_{\mathds{1}_1}^{\pm 1}$.

    \item[\upshape(5)]
    $\pi = \mathds{1}_1\times \tau$, where $\tau\in\Irr(\G_2)$ is infinite-dimensional and $\Ha_{1,1}$-distinguished.
\end{enumerate}
\end{prop}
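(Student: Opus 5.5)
We begin with a reduction to products $\pi_1\times\pi_2$. By Theorem~\ref{anand}(1), $\theta$ is not supercuspidal. Hence $\theta$ embeds into some $\pi_1'\times\pi_2'$, with $(\pi_1',\pi_2')\in\Irr(\G_1)\times\Irr(\G_2)$ or $\Irr(\G_2)\times\Irr(\G_1)$. Passing to contragredients (Lemma~\ref{cont}) and using $\widetilde{\pi_1'\times\pi_2'}\simeq\widetilde{\pi_1'}\times\widetilde{\pi_2'}$ makes $\widetilde\theta$ a quotient of such a product. In the Grothendieck group the products $\pi_1'\times\pi_2'$ and $\pi_2'\times\pi_1'$ have the same composition factors, and a representation of $\G_2$ is itself a subquotient of a product of representations of $\G_1$ (or is cuspidal). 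So $\theta$ is a quotient of a representation of the form $\pi_1\times\pi_2$ with $\pi_1\in\Irr(\G_1)$, $\pi_2\in\Irr(\G_2)$. More precisely, we use the standard fact that every irreducible representation is a quotient of some standard module $L(\Delta_1)\times\cdots$. By regrouping the $\G_1$-factor first, using the freedom of ordering that preserves the quotient property when the relevant factors are unlinked, we get $\theta$ as a quotient of $\pi=\pi_1\times\pi_2$. A quotient being distinguished forces $\pi$ to be $\Ha_{1,2}$-distinguished, so it suffices to determine all $\Ha_{1,2}$-distinguished $\pi=\pi_1\times\pi_2$ and show each is among (1)--(5), up to having the same composition factors.

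Next we apply Lemma~\ref{lem3} with $n=3$, $k=1$ and run through each type of $\pi_2$. For closed orbit (I), $\pi_1=\nu^{0}=\mathds{1}_1$ and $\pi_2$ is $(\Ha_{1,1},\chi_{1/2})$-distinguished. If $\pi_2$ is infinite-dimensional, the remark preceding the table reduces this to $\Ha_{1,1}$-distinction, which gives (5). If $\pi_2$ is a character, we check directly that $\chi_{1/2}$-equivariance forces $\pi_2=Z([0,1])$ up to the appropriate twist. For closed orbit (II), $\G_1$ has no $\Ha_{1,0}$ condition beyond the character $\chi_{-1}$, which pins down $\pi_1=\nu^{-1}_{\mathds 1_1}$. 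We then need $\pi_2=\nu^{1}$ as a character of $\G_2$, i.e.\ $Z([0,1]_{(\mathds 1_1)})$, which gives (2); the orbit (I) character case lands in the same family up to reordering.

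For orbit (III), $r_{(0,1)}(\pi_1)=\pi_1$, and we need a subquotient of $\pi_1\otimes r_{(1,1)}(\pi_2)$ of the form $\nu^{1}\otimes\rho\otimes\widetilde\rho\otimes\nu^{0}$. Read correctly with $k=1$, this means $\pi_1\otimes\rho\otimes\widetilde\rho$ together with the stated twist conditions. We then use the table. Supercuspidal $\pi_2$ has zero Jacquet module. For $\pi_2=\chi$ or $\chi\St_2$, the exponents of $r_{(1,1)}(\pi_2)$ are $\nu^{\mp1}\chi\otimes\nu^{\pm1}\chi$; self-contragredience forces $\chi^2$ trivial on the relevant part, and the twist matching forces $\chi$ and $\pi_1$ to give exactly (3) and (1). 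For $\Sp_2(\sigma)$ and $\St_2(\sigma)$, the pair $\nu^{\mp1/2}\sigma\otimes\nu^{\pm1/2}\sigma$ cannot be of the form $\rho\otimes\widetilde\rho$ with the required central exponent, because the absolute value exponents do not cancel. For principal series $\sigma_1\times\sigma_2$, one of the two Jacquet terms gives $\sigma_2\simeq\widetilde\sigma_1$ and $\pi_1=\mathds 1_1$, so $\pi$ has the composition factors of $\mathds1_1\times\sigma\times\widetilde\sigma$. The excluded cases $\sigma\simeq\nu_{\mathds1_1}^{\pm1}$ make $\sigma\times\widetilde\sigma$ reducible and are already covered by (1)--(3), so we get (4).

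The main obstacle is the first step: justifying that $\theta$ is a \emph{quotient}, not merely a subquotient, of some $\pi_1\times\pi_2$ with the $\G_1$-factor first. Quotient-ness is what allows distinction to pass upward. The plan is to handle this via Langlands quotients of standard modules combined with Lemma~\ref{cont}, so that both orderings are available. Since the statement only asserts that $\theta$ is a subquotient of the listed $\pi$, it suffices that the listed families are closed under reordering up to semisimplification, which we will check case by case using Lemmas~\ref{l1}--\ref{l4}. The careful bookkeeping of the twists $\chi_{k/2}$ and $\nu^{n-k-1}$ in orbit (III) is the other delicate point.
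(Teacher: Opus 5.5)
Your overall route is the paper's: reduce to a distinguished $\pi=\pi_1\times\pi_2$ with $\pi_1\in\Irr(\G_1)$, $\pi_2\in\Irr(\G_2)$, then run Lemma~\ref{lem3} with $n=3$, $k=1$ through the types of $\pi_2$. However, the orbit (III) step is carried out with the wrong condition, and this step is exactly what produces (1), (3) and (4). For $k=1$, $r_{(0,1)}(\pi_1)=\pi_1$ is a representation of $\G_0\times\G_1$. So the factor $\nu^{n-k-1}$ lives on the trivial group $\G_0$, the $\rho$ in $\nu^{n-k-1}\otimes\rho\otimes\widetilde\rho\otimes\nu^{-(k-1)}$ is $\pi_1$ itself, and $\nu^{-(k-1)}=\nu^{0}$ is the trivial character of the second $\G_1$-factor of $r_{(1,1)}(\pi_2)$. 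The condition is therefore that $r_{(1,1)}(\pi_2)$ has an irreducible subquotient $\widetilde{\pi_1}\otimes\mathds{1}_1$. It is not that $r_{(1,1)}(\pi_2)$ contains a pair of the form $\rho\otimes\widetilde\rho$. Your row-by-row arguments use the latter reading, and they do not hold.
For $\pi_2=\chi$, the correct condition gives $\nu\chi=\mathds{1}_1$. So $\chi=\nu^{-1}$ is not quadratic (``self-contragredience forces $\chi^2$ trivial'' is wrong), and $\widetilde{\pi_1}=\nu^{-1}\chi=\nu^{-2}$, i.e.\ $\pi_1=\nu_{\mathds{1}_1}$, giving (3). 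Likewise $\chi\St_2$ forces $\chi=\nu$ and $\pi_1=\nu_{\mathds{1}_1}^{-1}$, giving (1).
For $\Sp_2(\sigma)$ and $\St_2(\sigma)$ the exponents \emph{do} cancel. If $\sigma\simeq\widetilde\sigma$, then $\nu^{-1/2}\sigma\otimes\nu^{1/2}\sigma=\rho\otimes\widetilde\rho$ with $\rho=\nu^{-1/2}\sigma$, so under your reading these rows would not be excluded. The actual obstruction is that $\nu^{\pm1/2}\sigma$ has dimension $>1$ and hence is never $\mathds{1}_1$.
For principal series, the condition is $\{\sigma_1,\sigma_2\}=\{\widetilde{\pi_1},\mathds{1}_1\}$, not ``$\sigma_2\simeq\widetilde{\sigma_1}$ and $\pi_1=\mathds{1}_1$''. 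This yields the composition factors of $\mathds{1}_1\times\pi_1\times\widetilde{\pi_1}$, i.e.\ (4) with $\sigma=\pi_1$.
As written, your conclusions for (1), (3), (4) agree with Table~\ref{tab:placeholder_label} only because they are imported from it, not derived.

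There are also smaller errors.
\begin{itemize}
\item In closed orbit (I) with $\pi_2=\chi\circ\Nrd$ one-dimensional, the restriction to $\Ha_{1,1}$ is $\chi\circ\Nrd\otimes\chi\circ\Nrd$, while $\chi_{1/2}=\nu\otimes\nu^{-1}$. So $\chi\circ\Nrd$ would have to equal both $\nu$ and $\nu^{-1}$ on $\G_1$, and this case is empty. Your claim that it forces $\pi_2=Z([0,1]_{(\mathds{1}_1)})$ would produce $\mathds{1}_1\times Z([0,1]_{(\mathds{1}_1)})$. That representation is irreducible (the segments are nested) and is not a subquotient of anything in (1)--(5), so it does not ``land in the same family''.
\item $\nu_{\mathds{1}_1}\times\nu_{\mathds{1}_1}^{-1}$ is irreducible, since $[1]$ and $[-1]$ are not linked. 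The cases $\sigma\simeq\nu_{\mathds{1}_1}^{\pm1}$ are excluded from (4) for a different reason: then $\pi_2=\widetilde\sigma\times\mathds{1}_1$ is reducible, hence not a principal series, and its constituents are handled by the character and Steinberg rows.
\item You are right that one needs $\theta$ as a \emph{quotient} with the $\G_1$-factor first; the paper passes quickly over the ordering $\pi_2\times\pi_1$. But reordering unlinked standard-module factors does not cover, e.g., $L([-1]_{(\mathds{1}_1)},[0,1]_{(\mathds{1}_1)})$. Its standard module $L([0,1]_{(\mathds{1}_1)})\times\nu_{\mathds{1}_1}^{-1}$ has the linked $\G_1$-factor last. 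Passing to $\widetilde\theta$ instead would require the list to be closed under contragredients, which you do not check.
\end{itemize}
A clean fix for the reduction is as follows. By dualizing an embedding of $\widetilde\theta$, $\theta$ is a quotient of a product of cuspidals. If the cuspidal support contains a $\G_2$-cuspidal, the product $\rho\times\rho'$ is irreducible and may be reordered. Otherwise $\theta$ is a quotient of $\rho_1\times(\rho_2\times\rho_3)$. By exactness of induction along a composition series of $\rho_2\times\rho_3$, it is then a quotient of $\rho_1\times\tau$ for some irreducible subquotient $\tau$ of $\rho_2\times\rho_3$.
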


       Next, we prove Conjecture~\ref{conj} for the case \(n=3\), which we state below for convenience.

\begin{theorem}\label{t1}

          Let $\theta\in\Irr(\G_3)$. Then $\theta$ is $\Ha_{1,2}$-distinguished if and only if it is either $\mathds{1}_3$, or of the form $\mathds{1}_1 \times \tau$, where $\tau \in \Irr(\G_2)$ is infinite-dimensional and $\Ha_{1,1}$-distinguished.
          
\end{theorem}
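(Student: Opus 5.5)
We prove the two implications separately, treating the ``only if'' direction case by case along Proposition~\ref{p1}.

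\emph{Sufficiency.} For $\theta=\mathds{1}_3$ there is nothing to check. Now let $\theta=\mathds{1}_1\times\tau$ be irreducible, with $\tau$ infinite-dimensional and $\Ha_{1,1}$-distinguished. Lemma~\ref{lem3}(1) with $k=1$, $n=3$ requires $\pi_1=\nu^{0}=\mathds{1}_1$ and $\pi_2$ to be $(\Ha_{1,1},\chi_{1/2})$-distinguished. By the remark preceding Proposition~\ref{p1}, the latter is equivalent to $\Ha_{1,1}$-distinction for infinite-dimensional $\tau$. So the closed orbit~(I) condition holds, and Remark~\ref{rem1} shows that $\mathds{1}_1\times\tau$ is $\Ha_{1,2}$-distinguished.

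\emph{Necessity.} Let $\theta$ be $\Ha_{1,2}$-distinguished. By Proposition~\ref{p1}, $\theta$ is a subquotient of one of the five representations $\pi$ listed there, and we list the irreducible subquotients of each.

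\begin{itemize}
\item In cases (1)--(3), the cuspidal support is $\{\nu_{\mathds{1}_1}^{-1},\mathds{1}_1,\nu_{\mathds{1}_1}\}$. By Lemmas~\ref{l2} and~\ref{l4}, the subquotients are $Z$ of multisegments $\le\{[-1],[0],[1]\}$, namely:
\begin{itemize}
\item $\mathds{1}_3=Z([-1,1])$;
\item $\St_3=L([-1,1])$;
\item $Z([-1,0],[1])$ and $Z([-1],[0,1])$;
\item $Z([-1],[0],[1])=L([-1,1])$.
\end{itemize}
Using Lemmas~\ref{l1} and~\ref{l3}, we locate which of these actually occur in each specific $\pi$. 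For example, $\nu^{-1}\times Z([0,1])$ has the two constituents $Z([-1,1])=\mathds{1}_3$ and $Z([-1],[0,1])$.
\item In case (4), the representation $\mathds{1}_1\times\sigma\times\widetilde\sigma$ is irreducible unless $\sigma$ and $\widetilde\sigma$ are linked with each other or with $\mathds{1}_1$. The excluded cases $\sigma\simeq\nu^{\pm1}_{\mathds{1}_1}$ remove the linkage with $\mathds{1}_1$. The remaining reducible situation is $\sigma=\nu^{\pm1/2}\sigma_0$ with $\sigma_0$ self-dual and $\dim\sigma_0>1$, or $\sigma=\nu^{\pm1}\chi$ with $\chi$ a character. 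Then $\sigma\times\widetilde\sigma$ has constituents $\Sp_2(\sigma_0)$ and $\St_2(\sigma_0)$ (respectively $\chi'\circ\Nrd$ and $\chi'\St_2$ after reindexing), and each subquotient is $\mathds{1}_1\times\tau'$, irreducible by the non-linkage criterion. When irreducible, $\mathds{1}_1\times\sigma\times\widetilde\sigma=\mathds{1}_1\times(\sigma\times\widetilde\sigma)$ is already of the asserted form.
\item In case (5), $\mathds{1}_1\times\tau$ is irreducible unless $\tau$ involves $\nu^{\pm1}$ in its support. Among the distinguished $\tau$ of $\G_2$ this means $\tau=\chi\St_2$ with $\chi$ trivial, or $\tau=\nu\times\nu^{-1}$, and these reduce to case (1)/(4).
\end{itemize}

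The main obstacle is eliminating the unwanted subquotients. We must show the following are not $\Ha_{1,2}$-distinguished:
\begin{itemize}
\item $\St_3$, which is excluded by Theorem~\ref{anand}(1);
\item $Z([-1],[0,1])$ and $Z([-1,0],[1])$, which are contragredient-related (modulo $\widetilde{\Delta}$), so by Lemma~\ref{cont} it suffices to treat one;
\item any $\mathds{1}_1\times\tau'$ with $\tau'$ not $\Ha_{1,1}$-distinguished, for instance the wrong member of $\{\St_2(\sigma_0),\Sp_2(\sigma_0)\}$, or finite-dimensional $\tau'$.
\end{itemize}
For each such $\theta$, we realize it as a quotient (or, after Lemma~\ref{cont}, a subrepresentation of the dual) of a standard module $\pi_1\times\pi_2$ for which Lemma~\ref{lem3} fails. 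For instance, $Z([-1],[0,1])$ is the unique quotient of $Z([0,1])\times\nu^{-1}$ by Lemma~\ref{l1}. Applying Lemma~\ref{lem3} with $k=2$, the closed orbit (II) requires $\pi_2=\nu^{0}$, orbit (I) requires $\pi_1=\nu^{-1}$, and orbit (III) requires a constituent $\rho\otimes\widetilde\rho$ in the appropriate Jacquet module, which we check by computing $r_{(1,1)}Z([0,1])=\nu^{1/2\cdot2}\otimes\cdots$ explicitly. Consequently, this quotient is not distinguished.

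For the $\mathds{1}_1\times\tau'$ cases, we write $\theta=\tau'\times\mathds{1}_1$ (irreducible, so the order can be swapped) and apply Lemma~\ref{lem3} with $k=2$. Orbit (II) forces $\tau'$ to be $(\Ha_{1,1},\chi_{1/2})$-distinguished, hence $\Ha_{1,2}$-… that is, $\Ha_{1,1}$-distinguished. Orbit (I) forces $\tau'=\nu^{-1}$ on $\G_2$, which is impossible. Orbit (III) forces Jacquet-module constituents of the shape $\nu^{0}\otimes\rho\otimes\widetilde\rho\otimes\nu^{-1}$, which we rule out by the explicit Jacquet modules in the table.
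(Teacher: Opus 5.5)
Your sufficiency argument and the overall bookkeeping for necessity follow the paper. That bookkeeping consists of running through Proposition~\ref{p1}, discarding $\St_3$ by Theorem~\ref{anand}, pairing $Z([-1],[0,1])$ with its contragredient $Z([-1,0],[1])$ via Lemma~\ref{cont}, and testing each $\mathds{1}_1\times\tau'$ against Lemma~\ref{lem3}. The gap is exactly at the step you call the main obstacle. Below, all segments are taken with respect to $\mathds{1}_1$, so $\nu_{\mathds{1}_1}=\nu^2$.

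\textbf{Why your step fails.} Since $[-1]$ precedes $[0,1]$, Lemma~\ref{l1} says that in $Z([0,1])\times\nu_{\mathds{1}_1}^{-1}$ the representation $Z([-1],[0,1])$ is the unique irreducible \emph{subrepresentation}. The unique irreducible quotient is $Z([-1,1])=\mathds{1}_3$. So this module is itself $\Ha_{1,2}$-distinguished, and the Lemma~\ref{lem3} test cannot fail on it. Concretely, with $k=2$ we have $Z([0,1])=\nu^{1}$ on $\G_2$, and
$r_{(1,1)}(Z([0,1]))\otimes\nu_{\mathds{1}_1}^{-1}=\nu^{0}\otimes\nu^{2}\otimes\nu^{-2}$.
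This is precisely of the orbit (III) shape $\nu^{0}\otimes\rho\otimes\widetilde\rho$ with $\rho=\nu^2$.

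Your fallback (``a subrepresentation of the dual'') does not rescue it either. The dual module $Z([-1,0])\times\nu_{\mathds{1}_1}$ satisfies closed orbit (I). The module $\nu_{\mathds{1}_1}^{-1}\times Z([0,1])$, which really has $Z([-1],[0,1])$ as its quotient, satisfies closed orbit (II). So no Zelevinsky-type standard module can detect the non-distinction of $Z([-1],[0,1])$.

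\textbf{The missing idea} is to pass to the Langlands side. The M{\oe}glin--Waldspurger algorithm gives $Z([-1],[0,1])\simeq L([1],[-1,0])$. By Lemma~\ref{l3}, this is the unique irreducible quotient of $\nu_{\mathds{1}_1}\times L([-1,0])$. For that module (with $k=1$) one has $\pi_1\otimes r_{(1,1)}(\pi_2)=\nu^{2}\otimes\nu^{0}\otimes\nu^{-2}$, and all three conditions of Lemma~\ref{lem3} fail:
\begin{itemize}
\item orbit (I) fails because $\pi_1=\nu^2\neq\mathds{1}_1$;
\item orbit (II) fails because $\pi_2$ is not the character $\nu^{1}$ of $\G_2$;
\item orbit (III) fails because the middle factor $\nu^0$ is not $\widetilde{\pi_1}=\nu^{-2}$.
\end{itemize}
Hence $L([1],[-1,0])$ is not distinguished. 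Lemma~\ref{cont} then disposes of $Z([-1,0],[1])\simeq L([-1],[0,1])$. This is how the paper argues in its Cases (1)--(2).

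\textbf{Minor points.}
\begin{itemize}
\item Your list for cases (1)--(3) counts $\St_3=L([-1,1])=Z([-1],[0],[1])$ twice.
\item $\mathds{1}_1\times\St_2$ is in fact irreducible: the support $\nu^{\pm1}=\nu_{\mathds{1}_1}^{\pm1/2}$ of $\St_2$ is not linked to $[0]$. This is harmless, since it is covered by case (4).
\item In case (4), the character $\chi$ must be quadratic.
\end{itemize}
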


\begin{proof}

        The proof proceeds by examining each representation appearing in Proposition~\ref{p1}, and determining the $\Ha_{1,2}$-distinguished irreducible subquotients in each case.

\smallskip

        \noindent \textbf{Case (1).} $\pi=\nu_{\mathds{1}_1}^{-1}\times L([0,1]_{(\mathds{1}_1)})$. In this case, it follows from Lemma~\ref{l3} that $\pi$ has two irreducible subquotients $\theta_1 = L([-1,1]_{(\mathds{1}_1)})$ and $\theta_2 = L([-1]_{(\mathds{1}_1)},[0,1]_{(\mathds{1}_1)})$. Since $\theta_1$ is a discrete series representation of $\G_3$, it is not $\Ha_{1,2}$-distinguished by Theorem~\ref{anand}. It is easy to observe that $\widetilde{\theta_2}$ is isomorphic to $L([1]_{(\mathds{1}_1)},[-1,0]_{(\mathds{1}_1)})$, and therefore it can be realized as a unique irreducible quotient of $\nu_{\mathds{1}_1} \times L([-1,0]_{(\mathds{1}_1)})$. Applying Lemma \ref{lem3} with $n=3$ and $k=1$, we deduce that the latter representation is not $\Ha_{1,2}$-distinguished. Hence, $\widetilde{\theta_2}$ is not $\Ha_{1,2}$-distinguished. Thus, Lemma \ref{cont} implies that $\theta_2$ is also not $\Ha_{1,2}$-distinguished. 

\smallskip

         \noindent \textbf{Case (2).}  $\pi=\nu_{\mathds{1}_1}^{-1}\times Z([0,1]_{(\mathds{1}_1)})$. In this context, Lemma~\ref{l1} implies that $\pi$ is glued from two irreducible subquotients $\theta_1 = Z([-1,1]_{(\mathds{1}_1)})$ and $\theta_2 = Z([-1]_{(\mathds{1}_1)}, [0,1]_{(\mathds{1}_1)})$. Of these, $\theta_1$ is the trivial character, and consequently $\Ha_{1,2}$-distinguished. By M{\oe}glin-Waldspurger algorithm $\S$\ref{walds}, $\theta_2$ is isomorphic to $L([1]_{(\mathds{1}_1)},[-1,0]_{(\mathds{1}_1)})$. The question of $\Ha_{1,2}$-distinction for the latter representation has already been addressed in the previous case.

\smallskip

         \noindent \textbf{Case (3).} $\pi=\nu_{\mathds{1}_1}\times Z([-1,0]_{(\mathds{1})})$. In this situation, applying Lemma~\ref{l1}, we get that $\pi$ has two irreducible subquotients $Z([-1,1]_{(\mathds{1}_1)})$ and $Z([1]_{(\mathds{1}_1)},[-1,0]_{(\mathds{1}_1)})$. It is easy to observe that the question of $\Ha_{1,2}$-distinction for both subquotients has already been discussed in previous cases. 

\smallskip
         \noindent \textbf{Case (4).} 
          $\pi=\mathds{1}_1\times \sigma \times \widetilde{\sigma}$, where $\sigma\in\Irr(\G_1)$ with $\sigma\not\simeq\nu_{\mathds{1}_1}^{\pm 1}$. Suppose $\sigma$ is not of the following forms:
    \begin{itemize}
          \item $\nu_{\chi}^{\pm \frac{1}{2}}\chi$ with $\chi$ being a square trivial character,
    
          \item $\nu_{\sigma'}^{\pm\frac{1}{2}}\sigma'$, where $\sigma'$ is self-dual representation with $\dim(\sigma')>1$.
    \end{itemize} 
          Then $\pi$ is irreducible and is $\Ha_{1,2}$-distinguished by Remark \ref{rem1} because  it satisfies the condition of closed orbit~(I). 
 So, assume otherwise. Let us divide our analysis into two subcases.

        \textbf{(4a)} $\sigma=\nu_{\chi}^{\pm \frac{1}{2}}\chi$. In this case, $\pi$ is reducible and it follows from Lemma~\ref{l1} that it has two irreducible subquotients $\theta_1=\mathds{1}_1\times\chi\St_2$ and $\theta_2=\mathds{1}_1\times\chi$. Of these, $\Ha_{1,2}$-distinction of  $\theta_1$ follows from the closed orbit (I) (see Remark \ref{rem1}). On the other hand, $\theta_2$ is not $\Ha_{1,2}$-distinguished because it does not satisfy any of the conditions of Lemma~\ref{lem3} for the situation $n=3$ and $k=1$.

\smallskip

     \textbf{(4b)}  $\sigma=\nu_{\sigma'}^{\pm\frac{1}{2}}\sigma'$. In this scenario, $\pi$ is reducible, and by Lemma~\ref{l1} we obtain  that it is glued from two irreducible subquotients $\theta_1=\mathds{1}_1\times\St_2(\sigma')$ and $\theta_2=\mathds{1}_1\times\Sp_2(\sigma')$. If follows from Lemma \ref{lem3} with $n=3$ and $k=1$ that $\theta_1$ (resp. $\theta_2$) is $\Ha_{1,2}$-distinguished if and only if the central character $\omega_{\sigma'}$ is non-trivial (resp. trivial).

\smallskip

        \noindent \textbf{Case (5).} $\pi=\mathds{1}_1\times \tau$, where $\tau\in\Irr(\G_2)$ is infinite-dimensional and $\Ha_{1,1}$-distinguished. Here, if $\tau\not\simeq\nu_{\mathds{1}_1}\times\nu_{\mathds{1}_1}^{-1}$, then $\pi$ is irreducible, and its $\Ha_{1,2}$-distinction follows from the closed orbit (I) (see Remark \ref{rem1}).

        On the other hand, if $\tau=\nu_{\mathds{1}_1}\times\nu_{\mathds{1}_1}^{-1}$, then $\pi$ is reducible, and Lemma~\ref{l2} shows that it decomposes into four irreducible subquotients $\theta_1=Z([-1,1]_{(\mathds{1}_1)})$, $\theta_2=Z([-1]_{(\mathds{1}_1)},[0,1]_{(\mathds{1}_1)})$, $\theta_3=Z([-1,0]_{(\mathds{1}_1)},[1]_{(\mathds{1}_1)})$, and $\theta_4=Z([-1]_{(\mathds{1}_1)},[0]_{(\mathds{1}_1)},[1]_{(\mathds{1}_1)})$. 
        Using the The M{\oe}glin--Waldspurger algorithm ~\S\ref{walds}, one obtains $\theta_3 \simeq L([-1]_{(\mathds{1}_1)},[0,1]_{(\mathds{1}_1)})$ and $\theta_4 \simeq L([-1,1]_{(\mathds{1}_1)})$.
        The question of $\Ha_{1,2}$-distinction for $\theta_1$ and $\theta_2$ (resp. $\theta_3$ and $\theta_4$) has been discussed in Case $2$ (resp. Case $1$) of this proof.

  Next, we prove the converse of the theorem. If $\theta$ is trivial, then it is $\Ha_{1,2}$-distinguished by definition. In the case where  $\theta=\mathds{1}_1\times \tau$ with $\tau\in\Irr(\G_2)$ being infinite-dimensional and $\Ha_{1,1}$-distinguished, Lemma \ref{lem3} with $n=3$ and $k=1$ implies that $\theta$ admits $\Ha_{1,2}$-distinction arising from the closed orbit (I). 
\end{proof}

\section{Proof of Conjecture \ref{conj}: Case \texorpdfstring{$n=4$}{n=4}}\label{G_4}

        \noindent This section proves Conjecture~\ref{conj} in the case $n=4$. By Theorem~\ref{anand}, an irreducible supercuspidal representation of $\G_4$ can not be $\Ha_{1,3}$-distinguished. Therefore, it remains to consider non-supercuspidal representations. In this direction, we have the following result, whose  proof is similar to ~\cite[Lemma~17]{hariom}.
        \begin{lemma}\label{hari}
            Let $\theta$ be an irreducible non-supercuspidal representation of $\G_4$ that is $\Ha_{1,3}$-distinguished. Then $\theta$ occurs as a quotient of a representation of the form $\pi=\pi_1 \times \pi_2$, where either $\pi_1 \in \mathcal{C}(\G_1)$ and $\pi_2 \in \mathcal{C}(\G_3)$, or $\pi_1,\pi_2 \in \Irr(\G_2)$. 
        \end{lemma}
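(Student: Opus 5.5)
Since $\theta$ is irreducible and not supercuspidal, its cuspidal support is not a single supercuspidal of $\G_4$. So $\theta$ embeds in $\rho_1\times\cdots\times\rho_r$ with $\rho_i\in\mathcal{C}(\G_{n_i})$, $r\ge 2$ and $\sum n_i=4$. Taking contragredients and using Frobenius reciprocity, $\theta$ is a quotient of such an induced representation for a suitable ordering of the $\rho_i$. Indeed, $\widetilde\theta$ embeds in $\widetilde\rho_1\times\cdots\times\widetilde\rho_r$, and dualizing gives $\theta$ as a quotient of $\rho_1\times\cdots\times\rho_r$.

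I would then split into cases according to the composition type $(n_1,\dots,n_r)$.

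\textbf{Type $(1,3)$ or $(3,1)$ up to order.} Here $\theta$ is a quotient of $\rho_1\times\rho_2$ with $\rho_1\in\mathcal{C}(\G_1)$ and $\rho_2\in\mathcal{C}(\G_3)$, or with the factors in the other order. The pair $\{\rho_1,\rho_2\}$ cannot be linked, since the cuspidals live on different groups. Hence the induced representation is irreducible and independent of order by Lemma~\ref{l1}, and $\theta\simeq\rho_1\times\rho_2$ has the first form in the statement.

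\textbf{Type $(2,2)$.} If $\theta$ is a quotient of $\rho_1\times\rho_2$ with $\rho_i\in\mathcal{C}(\G_2)$, we are already in the second form, since $\rho_i\in\Irr(\G_2)$.

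\textbf{Types containing $(1,1)$ as a block, i.e. $(1,1,2)$ up to order, or $(1,1,1,1)$.} The idea is to group adjacent factors using transitivity of induction. For a quotient $\theta$ of $\rho_1\times\rho_2\times\rho_3\times\rho_4$, write it as $(\rho_1\times\rho_2)\times(\rho_3\times\rho_4)$. Some irreducible quotient $\tau_1$ of $\rho_1\times\rho_2$ and some irreducible quotient $\tau_2$ of $\rho_3\times\rho_4$ then realize $\theta$ as a quotient of $\tau_1\times\tau_2$. This follows by exactness of induction: the map $\rho_1\times\rho_2\times\rho_3\times\rho_4\to\theta$ factors through a quotient, and we pass to irreducible quotients by a finite-length argument, as in \cite[Lemma~17]{hariom}. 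A cleaner route is to use Frobenius reciprocity. Then $\widetilde\theta\hookrightarrow\widetilde\rho_1\times\cdots$ gives a nonzero $\G_2\times\G_2$-map $r_{(2,2)}(\widetilde\theta)\to(\widetilde\rho_1\times\widetilde\rho_2)\otimes(\widetilde\rho_3\times\widetilde\rho_4)$, and its image contains an irreducible $\sigma_1\otimes\sigma_2$. So $\widetilde\theta\hookrightarrow\sigma_1\times\sigma_2$, and dualizing makes $\theta$ a quotient of $\widetilde\sigma_1\times\widetilde\sigma_2$ with $\widetilde\sigma_i\in\Irr(\G_2)$.

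For type $(1,1,2)$ in any order, I would first use Lemma~\ref{l1} to commute the factors so that the two $\G_1$-cuspidals are adjacent. This is allowed because a $\G_1$-cuspidal and a $\G_2$-cuspidal are never linked, so their product commutes. Then I group as $(\G_1\times\G_1)\times\G_2$ and apply the same Jacquet-module argument with respect to $\Par_{2,2}$.

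The main obstacle is exactly this commutation step for type $(1,2,1)$. One must check that $\rho\times\rho'\simeq\rho'\times\rho$ for $\rho\in\mathcal{C}(\G_1)$ and $\rho'\in\mathcal{C}(\G_2)$. I would justify this via Lemma~\ref{l1}(1), since singleton segments over non-isomorphic cuspidal lines are unlinked. Only then can the inducing data be rearranged into the $(2,2)$ shape without changing which quotients occur.

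Finally, the hypothesis that $\theta$ is $\Ha_{1,3}$-distinguished is not needed beyond restricting to non-supercuspidal $\theta$.
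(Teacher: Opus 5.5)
Your proof is correct and is essentially the argument the paper has in mind (it simply defers to \cite[Lemma~17]{hariom}): work with the cuspidal support, note that a $\G_1$-cuspidal and a $\G_3$-cuspidal are never linked so $\rho_1\times\rho_2$ is irreducible in type $(1,3)$, and otherwise commute the unlinked $\G_1$- and $\G_2$-cuspidals to regroup the inducing data into a $(2,2)$ block and pass to irreducible constituents. Two small repairs are needed in that last step: in your first route, $\tau_1,\tau_2$ can only be guaranteed to be irreducible \emph{subquotients} of $\rho_1\times\rho_2$ and $\rho_3\times\rho_4$ (filter each factor and take the first stage on which the map to $\theta$ is nonzero), which is all the lemma needs; and in the Frobenius-reciprocity route you must take an irreducible \emph{quotient} $\sigma_1\otimes\sigma_2$ of the image of $r_{(2,2)}(\widetilde\theta)$, because an irreducible subrepresentation of that image need not receive a nonzero map from $r_{(2,2)}(\widetilde\theta)$.
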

        
        In view of the preceding lemma, it remains to study \(\Ha_{1,3}\)-distinction for irreducible quotients of representations of the form \(\pi\) appearing there. Since $\pi$ admits an $\Ha_{1,3}$-distinguished quotient, it itself is $\Ha_{1,3}$-distinguished. To this end, we analyze all possible choices of \(\pi\) and, using Lemma~\ref{lem3}, determine the cases in which \(\Ha_{1,3}\)-distinction can occur. This is formulated in the following proposition.

\begin{prop}\label{p2}
Let \(\theta\in\Irr(\G_4)\) be \(\Ha_{1,3}\)-distinguished. Then \(\theta\) is a subquotient of one of the following representations \(\pi\) of \(\G_4\):
\begin{enumerate}
    \item[\upshape(1)]
    \(\pi=\St_2 \times \St_2\).

    \item[\upshape(2)]
    \(
    \pi=
    Z\!\left(\left[\frac{1}{2},\frac{3}{2}\right]_{(\mathds{1}_1)}\right)
    \times
    Z\!\left(\left[-\frac{3}{2},-\frac{1}{2}\right]_{(\mathds{1}_1)}\right).
    \)

    \item[\upshape(3)]
    \(
    \pi=
    \nu^{\frac{1}{2}}_{\mathds{1}_1}
    \times
    \mu
    \times
    \widetilde{\mu}
    \times
    \nu^{-\frac{1}{2}}_{\mathds{1}_1},
    \)
    where \(\mu\in\Irr(\G_1)\) with
    \(
    \mu \notin
    \left\{
    \nu^{-\frac{1}{2}}_{\mathds{1}_1},
    \nu^{\frac32}_{\mathds{1}_1}
    \right\}.
    \)

    \item[\upshape(4)]
    \(
    \pi=\mathds{1}_2 \times \tau,
    \)
    where \(\tau\in \Irr(\G_2)\) is an infinite-dimensional and 
    \(\Ha_{1,1}\)-distinguished.
\end{enumerate}
\end{prop}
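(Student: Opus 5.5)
The plan is to begin from Lemma~\ref{hari}. Let $\theta$ be a non-supercuspidal $\Ha_{1,3}$-distinguished irreducible representation. Then $\theta$ is a quotient of some $\pi=\pi_1\times\pi_2$ of one of two types, and $\pi$ is itself $\Ha_{1,3}$-distinguished. We then apply Lemma~\ref{lem3} with $n=4$ to each type and keep only those $\pi$ for which at least one of the three orbit conditions can hold.

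\emph{Type $\pi_1\in\mathcal C(\G_1)$, $\pi_2\in\mathcal C(\G_3)$ (and the reversed order).} We aim to show this type contributes nothing.
\begin{itemize}
\item Orbit~(III) needs $r_{(1,2)}(\pi_2)\neq 0$, which fails because $\pi_2$ is supercuspidal.
\item Closed orbit~(II) with $k=1$ would require $\pi_2$ to be a character of $\G_3$, which is impossible.
\item Closed orbit~(I) requires $\pi_2$ to be $(\Ha_{1,2},\chi_{1/2})$-distinguished. We exclude this by the twisted analogue of Theorem~\ref{anand}(1). The argument of \cite{anandavardhanan2024sign} is insensitive to the twist, since a supercuspidal representation has no nonzero Jacquet modules and only the open-orbit analysis matters.
\end{itemize}
The reversed order is handled symmetrically.

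\emph{Type $\pi_1,\pi_2\in\Irr(\G_2)$, $k=2$.}
\begin{itemize}
\item Closed orbit~(I) forces $\pi_1=\nu^{0}=\mathds 1_2$ and $\pi_2$ to be $(\Ha_{1,1},\chi_1)$-distinguished. If $\pi_2$ is infinite-dimensional, this is the same as $\Ha_{1,1}$-distinction by the Remark in Section~\ref{G_3}, giving item~(4). If $\pi_2=\chi\circ\Nrd$ is a character, restricting to $\Ha_{1,1}$ forces $\chi$ to equal $\nu^{2}$ on the first factor and $\nu^{-2}$ on the second. We check that this is impossible.
\item Closed orbit~(II) forces $\pi_2=\mathds 1_2$ and $\pi_1$ to be $(\Ha_{1,1},\chi_{-1})$-distinguished. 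Then $\pi=\tau\times\mathds 1_2$, which has the same Jordan--H\"older constituents as $\mathds 1_2\times\tau$, so it is again absorbed into item~(4).
\end{itemize}

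\emph{Orbit~(III) for $k=2$.} This is the main bookkeeping step, which I would organize as a table like Table~\ref{tab:placeholder_label}. The condition requires an irreducible subquotient $\nu\otimes\rho$ of $r_{(1,1)}(\pi_1)$ and $\widetilde\rho\otimes\nu^{-1}$ of $r_{(1,1)}(\pi_2)$, with $\rho\in\Irr(\G_1)$. Supercuspidal $\pi_i$ are thereby excluded. Otherwise $\pi_1$ is a subquotient of $\nu^{\frac12}_{\mathds 1_1}\times\rho$ and $\pi_2$ is a subquotient of $\widetilde\rho\times\nu^{-\frac12}_{\mathds 1_1}$, using $\nu=\nu_{\mathds 1_1}^{1/2}$ since $s(\mathds 1_1)=2$. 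Hence $\pi$, and so $\theta$, is a subquotient of $\nu^{\frac12}_{\mathds 1_1}\times\rho\times\widetilde\rho\times\nu^{-\frac12}_{\mathds 1_1}$, which is item~(3) with $\mu=\rho$.

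\emph{Exceptional $\rho$.} The values excluded in~(3) are exactly those making the two-factor inductions reducible, so we list the constituents via Lemmas~\ref{l1} and~\ref{l3}.
\begin{itemize}
\item For $\rho=\nu^{-\frac12}_{\mathds 1_1}$, $\pi_1$ and $\pi_2$ lie in $\{\mathds 1_2,\St_2\}$. The possibilities are:
  \begin{itemize}
  \item $\St_2\times\St_2$, which is item~(1);
  \item combinations in which one factor is $\mathds 1_2$ and the other is $\St_2$ or $\mathds 1_2$, which lie under item~(4) or~(2), or are dismissed.
  \end{itemize}
  For the dismissed combinations we check directly which Jacquet-module constituents actually carry $\nu\otimes\rho$; for instance $r_{(1,1)}(\mathds 1_2)=\nu^{-1}\otimes\nu$.
\item For $\rho=\nu^{\frac32}_{\mathds 1_1}$, we get $\pi_1=Z([\frac12,\frac32]_{(\mathds 1_1)})$ and $\pi_2=Z([-\frac32,-\frac12]_{(\mathds 1_1)})$, which is item~(2). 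The competing $L$-constituents have the wrong Jacquet exponents and drop out.
\end{itemize}

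The main obstacle is twofold. First, we must justify the twisted vanishing for supercuspidals of $\G_3$ in closed orbit~(I). Second, the orbit~(III) exponent-matching for reducible $\pi_i$ must be done carefully, so that each exceptional $\rho$ is assigned correctly to (1), (2) or (4) and no case is lost.
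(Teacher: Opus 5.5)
Your proposal is correct in outline. It uses the same two inputs as the paper, Lemma~\ref{hari} and the three orbit conditions of Lemma~\ref{lem3}, but in the $\G_2\times\G_2$ case you organize the argument by orbit instead of by the type of $(\pi_1,\pi_2)$.

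\textbf{Comparison with the paper.} The paper runs through five type-cases (supercuspidal/miscellaneous, exactly one character, two characters, two Steinberg twists, two principal series) and tests each against the orbits. You treat orbit~(III) uniformly.
\begin{itemize}
\item A constituent $\nu\otimes\rho$ of $r_{(1,1)}(\pi_1)$, and likewise $\widetilde\rho\otimes\nu^{-1}$ of $r_{(1,1)}(\pi_2)$, fixes the cuspidal supports. Hence $\pi$ is a subquotient of $\nu^{\frac12}_{\mathds 1_1}\times\rho\times\widetilde\rho\times\nu^{-\frac12}_{\mathds 1_1}$.
\item The values of $\rho$ for which the two-factor products reduce are exactly the two excluded in item~(3).
\item Exponent matching sends these two values to $\St_2\times\St_2$ and to $Z([\tfrac12,\tfrac32]_{(\mathds 1_1)})\times Z([-\tfrac32,-\tfrac12]_{(\mathds 1_1)})$.
\end{itemize}
This is tidier than the paper's case split, and it explains why precisely those two $\mu$ are removed from item~(3).

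\textbf{Two small sharpenings.}
\begin{itemize}
\item For $\rho=\nu^{-\frac12}_{\mathds 1_1}$, you can say more than ``lie under item (4) or (2)''. The required constituent is $\nu\otimes\nu^{-1}$ on both sides, while $r_{(1,1)}(\mathds 1_2)=\nu^{-1}\otimes\nu$. So every combination involving $\mathds 1_2$ is dismissed, and only $\St_2\times\St_2$ survives. Note that $\mathds 1_2\times\mathds 1_2$ is not item~(2) in any case.
\item In closed orbit~(II) you should also exclude $\pi_1$ being a character, by the same computation you did for~(I).
\end{itemize}

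\textbf{The step that does not hold up as written} is the twisted vanishing in the $\mathcal{C}(\G_1)\times\mathcal{C}(\G_3)$ case. You correctly see that closed orbit~(I) with $k=1$ needs $\rho_3$ not to be $(\Ha_{1,2},\chi_{\frac12})$-distinguished, and that Theorem~\ref{anand}(1) only covers the trivial character. But your justification (``insensitive to the twist, only the open-orbit analysis matters'') is not an argument. A supercuspidal $\rho_3$ is not parabolically induced, so there is no orbit analysis for it at all.

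A clean replacement:
\begin{itemize}
\item $\rho_1\times\rho_3$ is irreducible, since segments on different cuspidal lines are never linked; so the reversed order needs no separate treatment.
\item It is a product of two discrete series of $\G_4$, so Theorem~\ref{anand}(2) excludes it outright, without passing through Lemma~\ref{lem3}.
\item This needs Theorem~\ref{anand}(2) read for essentially square-integrable (not necessarily unitary) discrete series, because the only dangerous case is $\rho_1=\nu$.
\end{itemize}
The paper simply asserts that none of the conditions of Lemma~\ref{lem3} hold in this case. It therefore relies on the same twisted statement without comment, and you were right to flag it as the main obstacle.
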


\begin{proof}
          By Lemma~\ref{hari}, it is enough to consider representations of the form \(\pi=\pi_1\times\pi_2\), where either \(\pi_1\in\mathcal{C}(\G_1)\) and \(\pi_2\in\mathcal{C}(\G_3)\), or \(\pi_1,\pi_2\in\Irr(\G_2)\). We first note that, in the case \(\pi_1\in\mathcal{C}(\G_1)\) and \(\pi_2\in\mathcal{C}(\G_3)\), the representation \(\pi\) satisfies none of the conditions listed in Lemma~\ref{lem3}. Consequently, this case can not give rise to an \(\Ha_{1,3}\)-distinguished quotient. It remains to consider representations of the form \(\pi=\pi_1\times\pi_2\), with \(\pi_1,\pi_2\in\Irr(\G_2)\). As $\pi$ has an $\Ha_{1,3}$-distinguished quotient, it is also $\Ha_{1,3}$-distinguished. We divide the argument into five cases according to the possible types of \(\pi_1\) and \(\pi_2\), and in each case determine all representations \(\pi\) satisfying at least one of the conditions in Lemma~\ref{lem3} for the situation $n=4$ and $k=2$. We note that the cases corresponding to orbit (III) can be analyzed, according to the possible types of the irreducible representations \(\pi_1\) and \(\pi_2\), in a manner analogous to the cases summarized in Table~\ref{tab:placeholder_label} of Section~\ref{G_3}.

\noindent\textbf{Case (1).} We consider the following possibilities: (a) both \(\pi_1\) and \(\pi_2\) are supercuspidal; (b) neither of \(\pi_1\) and \(\pi_2\) is supercuspidal or a character; (c) \(\pi_1\) and \(\pi_2\) are not simultaneously twists of Steinberg representations; and (d) \(\pi_1\) and \(\pi_2\) are not simultaneously principal series representations. In all these situations, the conditions of Lemma~\ref{lem3} are not satisfied. Consequently, no \(\Ha_{1,3}\)-distinguished representation \(\pi\) arises in this case.

\smallskip
\noindent\textbf{Case (2).} Exactly one of \(\pi_1\) and
\(\pi_2\) is a character. By the conditions corresponding to the closed
orbits~(I) and~(II) in Lemma~\ref{lem3}, the only possible representations
are of the form \(\pi=\pi_1\times \mathds{1}_2\) or \(\pi=\mathds{1}_2\times \pi_2\), where \(\pi_1\) is \((\Ha_{1,1},\chi_{-1})\)-distinguished and \(\pi_2\) is \((\Ha_{1,1},\chi_{1})\)-distinguished. Since for an infinite-dimensional
irreducible representation of \(\G_2\), being
\((\Ha_{1,1},\chi_s)\)-distinguished is equivalent to being
\(\Ha_{1,1}\)-distinguished, this gives the fourth possibility
listed in the statement.

\smallskip
\noindent\textbf{Case (3).} Both \(\pi_1\) and \(\pi_2\) are
characters of \(\G_2\). Write \(\pi_1=\chi\circ\det\) and
\(\pi_2=\chi'\circ\det\). Equivalently, \(\pi_1\) is the unique quotient of
\(\nu^{\frac{1}{2}}_{\mathds{1}_1}\chi
\times \nu^{-\frac{1}{2}}_{\mathds{1}_1}\chi\), and \(\pi_2\) is the unique
quotient of
\(\nu^{\frac{1}{2}}_{\mathds{1}_1}\chi'
\times \nu^{-\frac{1}{2}}_{\mathds{1}_1}\chi'\). Applying
Lemma~\ref{lem3}, the distinction condition forces the inducing data to be
\(Z\!\left(\left[\frac{1}{2},\frac{3}{2}\right]_{(\mathds{1}_1)}\right)\)
and
\(Z\!\left(\left[-\frac{3}{2},-\frac{1}{2}\right]_{(\mathds{1}_1)}\right)\).
Hence, this yields the second possibility listed in the statement.

\smallskip
\noindent\textbf{Case (4).} Both \(\pi_1\) and \(\pi_2\) are
twist of Steinberg representations. Write \(\pi_1=\chi\St_2\) and
\(\pi_2=\chi'\St_2\). Equivalently, \(\pi_1\) is the unique quotient of
\(\nu^{-\frac{1}{2}}_{\mathds{1}_1}\chi
\times \nu^{\frac{1}{2}}_{\mathds{1}_1}\chi\), and similarly \(\pi_2\) is
the unique quotient of
\(\nu^{-\frac{1}{2}}_{\mathds{1}_1}\chi'
\times \nu^{\frac{1}{2}}_{\mathds{1}_1}\chi'\). By Lemma~\ref{lem3}, the
required distinction condition holds only when both twists are trivial.
Therefore, this gives the first possibility listed in the statement.

\smallskip
 \noindent\textbf{Case (5).} 
Both \(\pi_1\) and \(\pi_2\) are principal series representations of \(\G_2\). Write \(\pi_1=\sigma_1\times\sigma_2\) and \(\pi_2=\sigma_3\times\sigma_4\), where \(\sigma_i\in\Irr(\G_1)\). The conditions in Lemma~\ref{lem3} force $\pi$ to be
of the following forms:
\begin{itemize}
    \item \(\nu^{\frac{1}{2}}_{\mathds{1}_1}\times \mu \times \widetilde{\mu}\times \nu^{-\frac{1}{2}}_{\mathds{1}_1}\),
    \item \(\mu \times \nu^{\frac{1}{2}}_{\mathds{1}_1}\times  \widetilde{\mu}\times \nu^{-\frac{1}{2}}_{\mathds{1}_1}\),
    \item \(\nu^{\frac{1}{2}}_{\mathds{1}_1}\times \mu \times \nu^{-\frac{1}{2}}_{\mathds{1}_1} \times \widetilde{\mu}\),
    \item \(\mu \times \nu^{\frac{1}{2}}_{\mathds{1}_1}\times \nu^{-\frac{1}{2}}_{\mathds{1}_1} \times  \widetilde{\mu}\),
\end{itemize}
where $\mu \in \Irr(\G_1)$ with $\mu \not\in \left\{
\nu^{-\frac{1}{2}}_{\mathds{1}_1},
\nu^{\frac32}_{\mathds{1}_1}
\right\}$.
Since all possible forms of $\pi$ differ only by a permutation of the inducing factors, all have identical irreducible subquotients, and therefore it is sufficient to examine the subquotients of one of them. Thus, we restrict our attention to the situation \(
    \pi=
    \nu^{\frac{1}{2}}_{\mathds{1}_1}
    \times
    \mu
    \times
    \widetilde{\mu}
    \times
    \nu^{-\frac{1}{2}}_{\mathds{1}_1},
    \)
    where \(\mu\in\Irr(\G_1)\) with
    \(
    \mu \notin
    \left\{
    \nu^{-\frac{1}{2}}_{\mathds{1}_1},
    \nu^{\frac32}_{\mathds{1}_1}
    \right\}.
    \)
Hence, this yields the third possibility
listed in the statement.

Combining the above cases, we obtain precisely the four possibilities
listed in the statement. This completes the proof.
\end{proof}

We are now in a position to prove Conjecture~\ref{conj} in the case \(n=4\). More precisely, we establish the following theorem.

\begin{theorem}\label{t2}
      Let $\theta \in \Irr(\G_4)$. Then \(\theta\) is \(\Ha_{1,3}\)-distinguished iff it is either $\mathds{1}_4$ or of the form $\mathds{1}_{2} \times \tau$  where \(\tau\in \Irr(\G_2)\) is infinite-dimensional and \(\Ha_{1,1}\)-distinguished.
\end{theorem}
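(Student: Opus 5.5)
The proof will mirror that of Theorem~\ref{t1}. By Proposition~\ref{p2}, any $\Ha_{1,3}$-distinguished $\theta\in\Irr(\G_4)$ is a subquotient of one of the four representations listed there. For each of them I will:
\begin{enumerate}
\item list the irreducible subquotients using Lemmas~\ref{l1}--\ref{l4};
\item translate between Zelevinsky and Langlands parametrisations via the M{\oe}glin--Waldspurger algorithm of \S\ref{walds};
\item decide distinction of each subquotient.
\end{enumerate}
Distinction will be shown positively through the closed orbits (Remark~\ref{rem1}). It will be excluded by one of three devices:
\begin{enumerate}
\item[(a)] Theorem~\ref{anand}, for discrete series and irreducible products of two discrete series;
\item[(b)] realising the subquotient (or its contragredient, using Lemma~\ref{cont}) as the unique quotient of a standard module $\pi_1\times\pi_2$ that fails all three conditions of Lemma~\ref{lem3} for $k=1$ or $k=2$;
\item[(c)] identifying the subquotient with one already treated in an earlier case.
\end{enumerate}

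\textbf{Cases (1)--(3).} For $\pi=\St_2\times\St_2$, the segments $[-\frac12,\frac12]_{(\mathds{1}_1)}$ are equal, so $\pi$ is irreducible by Lemma~\ref{l3}. It is then a product of two discrete series and is excluded by Theorem~\ref{anand}(2).

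For Case (2), the segments $[\frac12,\frac32]$ and $[-\frac32,-\frac12]$ are linked. Lemma~\ref{l1} gives two subquotients:
\begin{itemize}
\item $Z([-\frac32,\frac32]_{(\mathds{1}_1)})=\mathds{1}_4$, which is distinguished;
\item $Z([\frac12,\frac32],[-\frac32,-\frac12])$.
\end{itemize}
For the second one I will compute its Langlands data and show that it (or its dual) is the Langlands quotient of a product of the form $\chi\St_2\times\chi'\St_2$ or $\nu^{a}\times(\text{something on }\G_3)$ violating Lemma~\ref{lem3}.

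Case (3), $\nu^{1/2}\times\mu\times\widetilde\mu\times\nu^{-1/2}$, is the bulk of the work. For generic $\mu$, reorder it to $\mathds{1}_2\times(\mu\times\widetilde\mu)$ modulo irreducible pieces. Then:
\begin{itemize}
\item if $\mu\times\widetilde\mu$ is irreducible, $\pi$ equals $\mathds{1}_2\times\tau$ up to subquotients, and the piece $\St_2$-type is handled by exclusion;
\item otherwise $\mu\in\{\nu_\chi^{\pm1/2}\chi\ (\chi^2=1),\ \nu_{\sigma'}^{\pm1/2}\sigma'\ (\sigma'\text{ self-dual})\}$, or $\mu$ is $\nu^{\pm1/2},\nu^{\pm3/2}$ for the trivial character $\mathds{1}_1$. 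These cases create linkage with $\nu^{\pm1/2}$.
\end{itemize}
In each subcase I enumerate the multisegments $\mathfrak b\le\mathfrak a$ via Lemma~\ref{l2}. The subquotients of the form $\mathds{1}_2\times\chi\St_2$ or $\mathds{1}_2\times\St_2(\sigma')$, $\mathds{1}_2\times\Sp_2(\sigma')$ are treated exactly as in subcases (4a) and (4b) of Theorem~\ref{t1}, using closed orbit (I) with $k=2$ and the central character criterion. Every other subquotient is excluded by (a) or (b).

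\textbf{Case (4) and the converse.} For $\pi=\mathds{1}_2\times\tau$, the representation is irreducible unless $\tau$ is linked with $[-\frac12,\frac12]_{(\mathds{1}_1)}$, in which case it is distinguished by closed orbit (I). Reducibility occurs only for $\tau=\nu^{a}\times\nu^{-a}$ with $a\in\{\frac12,\frac32\}$ relative to $\nu_{\mathds{1}_1}$, or for $\tau=\St_2$. Its subquotients then reduce to those of Cases (1)--(3), to be matched using the M{\oe}glin--Waldspurger algorithm.

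The converse follows exactly as in Theorem~\ref{t1}. $\mathds{1}_4$ is trivially distinguished. The product $\mathds{1}_2\times\tau$ satisfies closed orbit (I) with $k=2$, because $\tau$ is $(\Ha_{1,1},\chi_1)$-distinguished whenever it is $\Ha_{1,1}$-distinguished, and Remark~\ref{rem1} applies.

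The main obstacle is the reducible subcases of Case (3), where $\mu\in\{\nu^{\pm1/2}_{\mathds{1}_1},\nu^{\pm3/2}_{\mathds{1}_1}\}$ give full principal series of $\G_4$ with up to eight constituents. For each non-obvious constituent, such as $Z([\frac12,\frac32],[-\frac12],[-\frac32])$, one must find a standard module realisation on which all three orbits of Lemma~\ref{lem3} demonstrably vanish. My first attempt will be the Langlands-quotient realisation $L(\Delta_1)\times L(\Delta_2,\dots)$ with $k$ chosen so that the exponents $\nu^{n-k-2}$ and $\nu^{-(k-2)}$ cannot match the cuspidal support. Only if that fails will I pass to the contragredient.
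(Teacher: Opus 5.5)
Your plan is essentially the paper's proof. It goes through the four representations of Proposition~\ref{p2}, lists constituents with Lemmas~\ref{l1}--\ref{l4} and the M{\oe}glin--Waldspurger algorithm, gets distinction from closed orbit~(I), and excludes the rest by Theorem~\ref{anand}, by Lemma~\ref{lem3} on a suitable standard module or on the contragredient via Lemma~\ref{cont}, or by reduction to an earlier case. You even single out $\mu=\nu_\chi^{\pm\frac12}\chi$ with $\chi\neq\mathds{1}_1$ quadratic, where $\mu\times\widetilde{\mu}$ is reducible; the paper's ``generic'' subcase of Case~(3) does not separate this out.

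Two slips need correcting. In Case~(4), $\mathds{1}_2\times\St_2$ is irreducible and $\nu^{\frac12}_{\mathds{1}_1}\times\nu^{-\frac12}_{\mathds{1}_1}$ is not an irreducible $\tau$, so the only reducible case is $\tau=\nu^{\frac32}_{\mathds{1}_1}\times\nu^{-\frac32}_{\mathds{1}_1}$. For $\mu=\nu^{-\frac32}_{\mathds{1}_1}$, one of the eight constituents is $\mathds{1}_4$ itself, so your claim that ``every other subquotient is excluded'' must exempt it.
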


\begin{proof}
          The strategy of the proof is to analyze each representation appearing in Proposition~\ref{p2} and determine which of its irreducible subquotients, if any, are $\Ha_{1,3}$-distinguished. We divide the analysis into four cases. 

          \noindent \textbf{Case (1).} $\pi=\St_2\times \St_2$. In this case, \(\pi\) is irreducible and is the product of two discrete series representations. By Theorem~\ref{anand}, \(\pi\) is not \(\Ha_{1,3}\)-distinguished.

          \noindent \textbf{Case (2).} $\pi = Z\!\left(\left[\frac{1}{2},\frac{3}{2}\right]_{(\mathds{1}_1)}\right)\times Z\!\left(\left[-\frac{3}{2},-\frac{1}{2}\right]_{(\mathds{1}_1)}\right)$. In this case, $\pi$ has two irreducible subquotients $\theta_1 = Z\!\left(\left[-\frac{3}{2},\frac{3}{2}\right]_{(\mathds{1}_1)}\right)\; \text{and} \; \theta_2 = Z\!\left(\left[\frac{1}{2},\frac{3}{2}\right]_{(\mathds{1}_1)},\left[-\frac{3}{2},-\frac{1}{2}\right]_{(\mathds{1}_1)}\right)$ by Lemma~\ref{l1}. Since $\theta_1$ is the trivial representation, it is $\Ha_{1,3}$-distinguished. On the other hand, by the M{\oe}glin-Waldspurger algorithm \S \ref{walds}, $\theta_2$ is isomorphic to $L\!\left(\left[\frac{3}{2}\right]_{(\mathds{1}_1)},\left[-\frac{1}{2},\frac{1}{2}\right]_{(\mathds{1}_1)},\left[-\frac{3}{2}\right]_{(\mathds{1}_1)}\right)$,  which is unique irreducible quotient of $\nu^{\frac{3}{2}}_{\mathds{1}_1} \times L\!\left(\left[-\frac{1}{2},\frac{1}{2}\right]_{(\mathds{1}_1)}\right) \times \nu^{-\frac{3}{2}}_{\mathds{1}_1}$. Since $L\!\left(\left[-\frac{1}{2},\frac{1}{2}\right]_{(\mathds{1}_1)}, \left[-\frac{3}{2}\right]_{(\mathds{1}_1)}\right)$ is a quotient of $L\!\left(\left[-\frac{1}{2},\frac{1}{2}\right]_{(\mathds{1}_1)}\right) \times \nu^{-\frac{3}{2}}_{\mathds{1}_1}$, it follows that $\theta_2$ can be realized as the unique irreducible quotient of $\nu^{\frac{3}{2}}_{\mathds{1}_1} \times L\!\left(\left[-\frac{1}{2},\frac{1}{2}\right]_{(\mathds{1}_1)}, \left[-\frac{3}{2}\right]_{(\mathds{1}_1)}\right)$. 
          Applying Lemma \ref{lem3} with $n=4$ and $k=1$, we conclude that the latter representation is not $\Ha_{1,3}$-distinguished. 
          Consequently, $\theta_2$ is not $\Ha_{1,3}$-distinguished.

          \noindent \textbf{Case (3).} \label{Case3} $\pi=\nu^{\frac{1}{2}}_{\mathds{1}_1}\times\mu\times\widetilde{\mu}\times\nu^{-\frac{1}{2}}_{\mathds{1}_1}$, where  
          $\mu\in\Irr(\G_1)
          ~\text{with}~\ \mu \notin\left\{\nu^{-\frac{1}{2}}_{\mathds{1}_1},\nu^{\frac32}_{\mathds{1}_1}\right\}$. Here,
          if $\mu$ is neither $\nu^{-\frac{3}{2}}_{\mathds{1}_1}$ nor $\nu^{\frac{1}{2}}_{\mathds{1}_1}$, and is not of the form $\nu_{\sigma}^{\pm\frac{1}{2}}\sigma$ for any self-dual representation $\sigma$ with $\dim(\sigma)>1$, then by Lemma \ref{l2} we get that $\pi$ has two irreducible subquotients 
          $\theta_1 = \xi_1 \times \xi_2$ with $\xi_1 = \mathds{1}_{2}$ and $\xi_2 = \mu \times \widetilde{\mu}$ and
          $\theta_2 = \eta_1 \times \eta_2$ with $\eta_1 = \St_{2}$ and $\eta_2 = \mu \times \widetilde{\mu}$. 
          Within these, an $\Ha_{1,3}$-invariant linear form on $\theta_1$ arises from the closed orbit (I) by applying Lemma \ref{lem3} with $n=4$ and $k=2$. However, $\theta_2$ is not $\Ha_{1,3}$-distinguished because it 
          does not satisfy the Lemma \ref{lem3}
          for $n=4$ and $k=2$. Now assume otherwise. There are three subcases.

            \textbf{(3a)} $\mu=\nu_{\sigma}^{\pm\frac{1}{2}}\sigma$. In this case, by Lemma~\ref{l2}, $\pi$ has four irreducible subquotients  $\theta_1=\mathds{1}_2\times\St_2(\sigma)$, $\theta_2=\mathds{1}_2\times\Sp_2(\sigma)$, $\theta_3=\St_2\times\St_2(\sigma)$, and $\theta_4=\St_2\times\Sp_2(\sigma)$. Among these, $\theta_3$ and $\theta_4$ does not satisfy any of the conditions of Lemma~\ref{lem3} for $n=4$ and $k=2$, and hence are not $\Ha_{1,3}$-distinguished. On the other hand, $\theta_1$ and $\theta_2$ satisfy the condition of closed orbit (I) in Lemma \ref{lem3} with $n=4$ and $k=2$. Hence, using the characterization of $\Ha_{1,1}$-distinction for $\St_2(\sigma)$ and $\Sp_2(\sigma)$, we conclude that $\theta_1$ (resp.\ $\theta_2$) is $\Ha_{1,3}$-distinguished if $\omega_{\sigma}$ is non-trivial (resp. trivial).

          \textbf{(3b)} $\mu=\nu^{-\frac{3}{2}}_{\mathds{1}_1}$. In this situation, by Lemma \ref{l2}, $\pi$ has following eight irreducible subquotients:
        
\begin{align*}
\theta_{1} &= L\!\left(\left[-\tfrac{3}{2},\tfrac{3}{2}\right]_{(\mathds{1}_1)}\right), 
&\quad \theta_{2} &= L\!\left(\left[-\tfrac{3}{2},\tfrac{1}{2}\right]_{(\mathds{1}_1)},\left[\tfrac{3}{2}\right]_{(\mathds{1}_1)}\right), \\
\theta_{3} &= L\!\left(\left[-\tfrac{3}{2}\right]_{(\mathds{1}_1)},\left[-\tfrac{1}{2},\tfrac{3}{2}\right]_{(\mathds{1}_1)}\right),
&\quad \theta_{4} &= L\!\left(\left[\tfrac{1}{2},\tfrac{3}{2}\right]_{(\mathds{1}_1)},\left[-\tfrac{3}{2},-\tfrac{1}{2}\right]_{(\mathds{1}_1)}\right), \\
\theta_{5} &= L\!\left(\left[\tfrac{1}{2}\right]_{(\mathds{1}_1)},\left[-\tfrac{3}{2},-\tfrac{1}{2}\right]_{(\mathds{1}_1)},\left[\tfrac{3}{2}\right]_{(\mathds{1}_1)}\right),
&\quad \theta_{6} &= L\!\left(\left[-\tfrac{3}{2}\right]_{(\mathds{1}_1)},\left[-\tfrac{1}{2},\tfrac{1}{2}\right]_{(\mathds{1}_1)},\left[\tfrac{3}{2}\right]_{(\mathds{1}_1)}\right), \\
\theta_{7} &= L\!\left(\left[-\tfrac{3}{2}\right]_{(\mathds{1}_1)},\left[\tfrac{1}{2},\tfrac{3}{2}\right]_{(\mathds{1}_1)},\left[-\tfrac{1}{2}\right]_{(\mathds{1}_1)}\right),
&\quad \theta_{8} &= L\!\left(\left[\tfrac{1}{2}\right]_{(\mathds{1}_1)},\left[-\tfrac{3}{2}\right]_{(\mathds{1}_1)},\left[\tfrac{3}{2}\right]_{(\mathds{1}_1)},\left[-\tfrac{1}{2}\right]_{(\mathds{1}_1)}\right).
\end{align*}

      Next, we consider each subquotient separately. The question of $\Ha_{1,3}$-distinction of $\theta_6$ and $\theta_8$ has already been settled in Case $(2)$.  The representation $\theta_1$ is a discrete series, and therefore, it is not $\Ha_{1,3}$-distinguished by Theorem~\ref{anand}. Lemma \ref{l3} implies that $\theta_2$ is the unique irreducible quotient of $\nu_{(\mathds{1}_1)}^{\frac{3}{2}}\times L\!\left(\left[-\tfrac{3}{2},\tfrac{1}{2}\right]_{(\mathds{1}_1)}\right)$. Since the latter representation does not satisfy the conditions of Lemma~\ref{lem3} for $n=4$ and $k=1$, $\theta_2$ is not $\Ha_{1,3}$-distinguished. By taking contragredient, we further conclude that $\theta_3$ is also not $\Ha_{1,3}$-distinguished. The representation $\theta_4$ is the unique irreducible quotient of $L\!\left(\left[\tfrac{1}{2},\tfrac{3}{2}\right]_{(\mathds{1}_1)}\right)\times L\!\left(\left[-\tfrac{3}{2},-\tfrac{1}{2}\right]_{(\mathds{1}_1)}\right)$. The latter representation does not satisfy the conditions of Lemma~\ref{lem3}; hence, it is not $\Ha_{1,3}$-distinguished. Therefore, $\theta_4$ is also not $\Ha_{1,3}$-distinguished.

     Let us now analyze the subquotient $\theta_5$. By the M{\oe}glin--Waldspurger algorithm (see $\S$\ref{walds}), $\theta_5$ is isomorphic to $\theta_5'=Z\left(\left[-\frac{1}{2},\frac{3}{2}\right]_{(\mathds{1}_1)},\left[-\frac{3}{2}\right]_{(\mathds{1}_1)}\right).$ Moreover, it is easy to observe by Lemma \ref{l1} that $\theta_5'$ is the unique irreducible quotient of $\nu^{-\frac{3}{2}}_{\mathds{1}_1}\times Z\left(\left[-\frac{1}{2},\frac{3}{2}\right]_{(\mathds{1}_1)}\right)$. The latter representation is not $\Ha_{1,3}$-distinguished because it does not satisfy Lemma~\ref{lem3} for $n=4$ and $k=1$. So, $\theta_5'$ is not $\Ha_{1,3}$-distinguished, and therefore neither is $\theta_5$. It is easy to observe that $\widetilde{\theta_7}$ is isomorphic to $\theta_5$. Therefore, $\widetilde{\theta_7}$ is not $\Ha_{1,3}$-distinguished. Thus, $\theta_7$ is not $\Ha_{1,3}$-distinguished by Lemma~\ref{cont}.

\textbf{(3c)}  $\mu=\nu_{\mathds{1}_1}^{\frac{1}{2}}$. In this case, Lemma \ref{l4} shows that $\pi$ has three irreducible subquotients:

\begin{enumerate}

\item[] $\theta_{1}= L\!\left(\left[-\frac{1}{2},\frac{1}{2}\right]_{(\mathds{1}_1)}\right)\times L\!\left(\left[-\frac{1}{2},\frac{1}{2}\right]_{(\mathds{1}_1)}\right),$

\item[] $\theta_{2}= L\!\left(\left[\frac{1}{2}\right]_{(\mathds{1}_1)},\left[-\frac{1}{2}\right]_{(\mathds{1}_1)}\right)\times L\!\left(\left[-\frac{1}{2},\frac{1}{2}\right]_{(\mathds{1}_1)}\right),$

\item[] $\theta_{3}= L\!\left(\left[\frac{1}{2}\right]_{(\mathds{1}_1)},\left[-\frac{1}{2}\right]_{(\mathds{1}_1)}\right)\times L\!\left(\left[\frac{1}{2}\right]_{(\mathds{1}_1)},\left[-\frac{1}{2}\right]_{(\mathds{1}_1)}\right).$

\end{enumerate}
The Case (1) covers $\Ha_{1,3}$ distinction of $\theta_1$. As $\theta_2$ satisfies condition of  closed orbit (I) in Lemma \ref{lem3} with $n=4$ and $k=2$, it is $\Ha_{1,3}$-distinguished. However, $\theta_3$ is not $\Ha_{1,3}$-distinguished since it fails to satisfy Lemma \ref{lem3} for $n=4$ and $k=2$.

       \noindent \textbf{Case (4).} \(
    \pi=\mathds{1}_2 \times \tau,
    \)
    where \(\tau\in \Irr(\G_2)\) is an infinite-dimensional and 
    \(\Ha_{1,1}\)-distinguished.
 In this context, the analysis naturally divides into the following subcases according to the structure of \(\tau\).

\textbf{(4a)} Suppose that \(\tau\) is not isomorphic to
\(\nu_{\mathds{1}_1}^{\frac{3}{2}}
\times
\nu_{\mathds{1}_1}^{-\frac{3}{2}}\).
In this case, the representation \(\pi\) is irreducible. Applying Lemma~\ref{lem3} with \(n=4\) and \(k=2\), the closed orbit (I) yields an \(\Ha_{1,3}\)-invariant linear form on \(\pi\).

\textbf{(4b)}  Suppose that \(\tau\) is isomorphic to
\(
\nu_{\mathds{1}_1}^{\frac{3}{2}}
\times
\nu_{\mathds{1}_1}^{-\frac{3}{2}}.
\)
In this scenario, Lemma~\ref{l2} shows that \(\pi\) is reducible and has precisely four irreducible subquotients, namely
\[
\begin{aligned}
\theta_1
&= Z\!\left(\left[-\tfrac{3}{2},\tfrac{3}{2}\right]_{(\mathds{1}_1)}\right), 
&
\theta_2
&= Z\!\left(\left[-\tfrac{3}{2},\tfrac{1}{2}\right]_{(\mathds{1}_1)},
\left[\tfrac{3}{2}\right]_{(\mathds{1}_1)}\right), \\[4pt]
\theta_3
&= Z\!\left(\left[-\tfrac{1}{2},\tfrac{3}{2}\right]_{(\mathds{1}_1)},
\left[-\tfrac{3}{2}\right]_{(\mathds{1}_1)}\right), 
&
\theta_4
&= Z\!\left(\left[-\tfrac{1}{2},\tfrac{1}{2}\right]_{(\mathds{1}_1)},
\left[\tfrac{3}{2}\right]_{(\mathds{1}_1)},
\left[-\tfrac{3}{2}\right]_{(\mathds{1}_1)}\right).
\end{aligned}
\]

         \noindent The question of $\Ha_{1,3}$-distinction of these subquotients has already been settled in the previous cases.

         We now establish the converse direction of the theorem. If $\theta$ is trivial, then it is evidently $\Ha_{1,3}$-distinguished. In the situation where $\theta=\mathds{1}_2\times \tau$, with $\tau\in\Irr(\G_2)$ infinite-dimensional and $\Ha_{1,1}$-distinguished, Lemma~3.2 applied with $n=4$ and $k=2$ shows that $\theta$ admits $\Ha_{1,3}$-distinction arising from the closed orbit (I).
\end{proof}

\section{Multiplicity-one property}\label{sec5}
        In this section, we prove Corollary~\ref{corr}. The proof is a direct consequence of the description of $\Ha_{1,n-1}$-distinguished representations given in Conjecture \ref{conj}.

\begin{proof}

           The validity of the corollary for $n\leq 3$ is already known. The case $n=1$ is trivial, whereas the cases $n=2$ and $n=3$ were established in \cite[Theorem~6.1]{raghukiri} and \cite[Theorem~2.5]{anandavardhanan2024sign}, respectively. So, let us assume $n \geq 4$. Assuming Conjecture~\ref{conj}, every irreducible $\Ha_{1,n-1}$-distinguished representation $\theta$ of $\G_n$ is either the trivial representation $\mathds{1}_n$ or of the form $\mathds{1}_{n-2}\times\tau$, where $\tau$ is an infinite-dimensional irreducible representation of $\G_2$. In the case where $\theta=\mathds{1}_n$, it is obvious that $\dim\Hom_{\Ha_{1,n-1}}(\theta,\mathbb{C})=1$. Now suppose that $\theta=\mathds{1}_{n-2}\times\tau$. By applying Lemma~\ref{lem3} with $k=n-2$, it is easy to observe that 
           $$\Hom_{\Ha_{1,n-1}}(\theta,\mathds{1})\simeq
           \Hom_{\G_{n-2}}(\mathds{1}_{n-2},\mathds{1}_{n-2})\otimes
           \Hom_{\Ha_{1,1}}(\tau,\mathds{1})\simeq
           \Hom_{\Ha_{1,1}}(\tau,\mathds{1}).$$
           Thus, the desired conclusion follows from the multiplicity-one theorem of Prasad--Raghuram~\cite[Theorem~6.1]{raghukiri}.

\end{proof}

\subsection*{Acknowledgement}
The authors thank Dipendra Prasad for his valuable comments and insightful
 suggestions.
 
\bibliographystyle{alpha}
\bibliography{linear}		
\end{document}